\newif\ifarxiv
\arxivtrue
\ifarxiv
\else
\fi

\documentclass[a4paper,leqno]{article}

\usepackage[utf8]{inputenc} \usepackage[usenames,dvipsnames]{xcolor}
\usepackage{amsmath}					\numberwithin{equation}{section}

\usepackage{hyphenat}

\DeclareSymbolFont{bbold}{U}{bbold}{m}{n}
\DeclareSymbolFontAlphabet{\mathbbold}{bbold}

\usepackage{amssymb}
\usepackage{euscript}
\usepackage{mathrsfs}
\usepackage{bbm}
\usepackage[amsmath,thmmarks]{ntheorem}	\usepackage[ntheorem]{empheq}
\usepackage[hmarginratio={1:1},textwidth=400pt]{geometry}
\usepackage[normalem]{ulem}

\usepackage{tikz}
\usetikzlibrary{cd}

\usepackage[all]{xy}

\usepackage[colorlinks=true,linkcolor=blue,anchorcolor=blue,citecolor=blue,
     		filecolor=blue,urlcolor=blue]{hyperref}
\usepackage{stmaryrd}
\SetSymbolFont{stmry}{bold}{U}{stmry}{m}{n} 

\usepackage{bussproofs}
\EnableBpAbbreviations

\usepackage{mathtools}
\mathtoolsset{
showonlyrefs=true }

\usepackage{array}
\usepackage{tabularx}
\usepackage{enumitem}

\usepackage[shortcuts]{extdash}

\usepackage{xspace}
\DeclareMathAlphabet{\mathpzc}{OT1}{pzc}{m}{it}
\xspaceremoveexception{-}
\makeatletter
\renewcommand*\@xspace@hook{\ifx\@let@token-\expandafter\@xspace@dash@i
\fi
}
\def\@xspace@dash@i-{\futurelet\@let@token\@xspace@dash@ii}
\def\@xspace@dash@ii{\ifx\@let@token-\else
\unskip
\fi
-}
\makeatother

\usepackage[small]{titlesec}
\setlist[enumerate]{label=(\arabic*)}

\newcommand{\PAsm}{\mathsf{PAsm}}
\newcommand{\downcl}{\mathopen\downarrow}

\newcommand{\nin}{{n\in\N}}
\newcommand{\hs}{_\#}

\newcommand{\angs}[1]{\langle #1\rangle}

\newcommand{\sto}{\,\to\,}

\hyphenation{Gro-then-dieck}
\hyphenation{pseu-do-func-tor}

\newcommand{\pit}{\pi_2}

\newcommand{\mcc}{\mathcal{C}}
\newcommand{\mcf}{\mathcal{F}}
\newcommand{\mfa}{\mathfrak{A}}

\newcommand{\Ord}{\mathsf{Ord}}
\newcommand{\IOrd}{\mathsf{IOrd}}

\newcommand{\hypk}{{\EuScript{K}}}

\newcommand{\trip}{{\EuScript{P}}}

\newcommand{\comk}{\mathsf{k}}				\newcommand{\coms}{\mathsf{s}}

\newcommand{\msep}{\mathrel|}		\newcommand{\op}{^\mathsf{op}}						\newcommand{\id}{\mathrm{id}}				

\newcommand{\ap}{\mathclose\cdot}			\newcommand{\qdot}{\,.\,}				

\newcommand{\setof}[2]{\{#1\msep #2\}}
\newcommand{\fami}[2]{(#1)_{#2}}

\newcommand{\tttc}{tripos-to-topos construction\xspace}

\newcommand{\N}{\mathbb{N}}
\newcommand{\R}{\mathbb{R}}

\newcommand{\xto}[1]{\stackrel{#1}{\to}}
\newcommand{\incl}{\hookrightarrow}
\newcommand{\epi}{\twoheadrightarrow}
\newcommand{\epil}{\twoheadleftarrow}

\newcommand{\vf}{\varphi}

\newcommand{\imp}{\Rightarrow}

\newcommand{\famf}{\mathsf{fam}}

\newcommand{\adj}{\dashv}

\newcommand{\pto}{\rightharpoonup}

\newcommand{\DCO}{\mathsf{DCO}}
\newcommand{\BCO}{\mathsf{BCO}}

\newcommand{\brar}{{(A,R)}}
\newcommand{\brbs}{{(B,S)}}

\newcommand{\UOrd}{\mathsf{UOrd}}

\newcommand{\fifa}{\EuScript{A}}
\newcommand{\fifh}{\EuScript{H}}
\newcommand{\fifk}{\EuScript{K}}
\newcommand{\fifp}{\EuScript{P}}

\newcommand{\down}{{\downarrow}}

\newcommand{\prim}{\mathsf{prim}}

\def\signed #1{{\leavevmode\unskip\nobreak\hfil\penalty50\hskip2em
\hbox{}\nobreak\hfil(#1)\parfillskip=0pt \finalhyphendemerits=0 \endgraf}}

\newsavebox\mybox

\newcommand{\gcons}{{\textstyle\int}}

\newcommand{\setord}{\Set\op\to\Ord}
\newcommand{\image}{\mathsf{im}}
\newcommand{\subs}{\subseteq}

\newcommand{\fa}{\forall}
\newcommand{\ex}{\exists}
\newcommand{\sfa}{\;\forall}
\newcommand{\sex}{\;\exists}

\makeatletter
\DeclareRobustCommand\onedot{\futurelet\@let@token\@onedot}
\def\@onedot{\ifx\@let@token.\else.\null\fi\xspace}
\makeatother

\newcommand{\ie}{i.e\onedot}

\newcommand{\eprime}{$\exists$-prime\xspace}
\newcommand{\ecompletion}{$\exists$-completion\xspace}
\let\lim\relax\DeclareMathOperator*{\lim}{\mathsf{lim}}

\DeclareFontFamily{U}{min}{}
\DeclareFontShape{U}{min}{m}{n}{<-> udmj30}{}
\newcommand\yo{\!\text{\usefont{U}{min}{m}{n}\symbol{'210}}}

\newcommand{\qtext}[1]{\quad\text{#1}\quad}
\newcommand{\qqtext}[1]{\qquad\text{#1}\qquad}

\newcommand{\qqdefequi}{\qquad:\Leftrightarrow\qquad}

\newenvironment{inumerate}{\begin{enumerate}[label=\normalfont(\roman*)]}{\end{enumerate}}

\newcommand{\h}{\text{-}}
\newcommand{\Set}{\mathsf{Set}}

\newcommand{\CC}{\mathbb{C}}

\makeatletter
\newcommand*{\twoheadrightsquigarrow}{\rightsquigarrow\joinrel\mathrel{\mathpalette\@twoheadrightsquigarrow\relax}}
\newcommand*{\@twoheadrightsquigarrow}[2]{\clipbox{{.7\width} 0pt 0pt {-.2\height}}{$\m@th#1\rightsquigarrow$}}
\makeatother

\DeclareFontFamily{U} {MnSymbolC}{}
\DeclareFontShape{U}{MnSymbolC}{m}{n}{
  <-6> MnSymbolC5
  <6-7> MnSymbolC6
  <7-8> MnSymbolC7
  <8-9> MnSymbolC8
  <9-10> MnSymbolC9
  <10-12> MnSymbolC10
  <12-> MnSymbolC12}{}
\DeclareFontShape{U}{MnSymbolC}{b}{n}{
  <-6> MnSymbolC-Bold5
  <6-7> MnSymbolC-Bold6
  <7-8> MnSymbolC-Bold7
  <8-9> MnSymbolC-Bold8
  <9-10> MnSymbolC-Bold9
  <10-12> MnSymbolC-Bold10
  <12-> MnSymbolC-Bold12}{}
\DeclareSymbolFont{MnSyC} {U} {MnSymbolC}{m}{n}
\DeclareMathSymbol{\smalltriangleright}{\mathbin}{MnSyC}{72}

\newcommand{\x}{\times}

\newcommand{\ox}{\otimes}

\makeatletter
\makeatother \hyphenation{Gro-then-dieck}
\hyphenation{pseu-do-func-tor}
\hyphenation{Kol-mo-go-rov}
\hyphenation{pre-shea-ves}
\hyphenation{ar-chi-me-de-an}
\hyphenation{ex-ten-sion}
\hyphenation{ex-ten-sions}
 \makeatletter
\providecommand*{\twoheadrightarrowfill@}{\arrowfill@\relbar\relbar\twoheadrightarrow
}
\providecommand*{\twoheadleftarrowfill@}{\arrowfill@\twoheadleftarrow\relbar\relbar
}
\providecommand*{\xepi}[2][]{\ext@arrow 0579\twoheadrightarrowfill@{#1}{#2}}
\providecommand*{\xepil}[2][]{\ext@arrow 5097\twoheadleftarrowfill@{#1}{#2}}
\makeatother

\makeatletter
\newbox\xrat@below
\newbox\xrat@above
\newcommand{\xmono}[2][]{\setbox\xrat@below=\hbox{\ensuremath{\scriptstyle #1}}\setbox\xrat@above=\hbox{\ensuremath{\scriptstyle #2}}\pgfmathsetlengthmacro{\xrat@len}{max(\wd\xrat@below,\wd\xrat@above)+.6em}\mathrel{\tikz [>->,baseline=-.75ex]
                 \draw (0,0) -- node[below=-2pt] {\box\xrat@below}
                                node[above=-2pt] {\box\xrat@above}
                       (\xrat@len,0) ;}}
\makeatother

\makeatletter
\newcommand*{\relrelbarsep}{.386ex}
\newcommand*{\relrelbar}{\mathrel{\mathpalette\@relrelbar\relrelbarsep
  }}
\newcommand*{\@relrelbar}[2]{\raise#2\hbox to 0pt{$\m@th#1\relbar$\hss}\lower#2\hbox{$\m@th#1\relbar$}}
\providecommand*{\rightrightarrowsfill@}{\arrowfill@\relrelbar\relrelbar\rightrightarrows
}
\providecommand*{\leftleftarrowsfill@}{\arrowfill@\leftleftarrows\relrelbar\relrelbar
}
\providecommand*{\xrightrightarrows}[2][]{\ext@arrow 0359\rightrightarrowsfill@{#1}{#2}}
\providecommand*{\xleftleftarrows}[2][]{\ext@arrow 3095\leftleftarrowsfill@{#1}{#2}}
\makeatother

\makeatletter
\newbox\xrat@below
\newbox\xrat@above
\newcommand{\xrightarrowtail}[2][]{\setbox\xrat@below=\hbox{\ensuremath{\scriptstyle #1}}\setbox\xrat@above=\hbox{\ensuremath{\scriptstyle #2}}\pgfmathsetlengthmacro{\xrat@len}{max(\wd\xrat@below,\wd\xrat@above)+.6em}\mathrel{\tikz [>->,baseline=-.75ex]
                 \draw (0,0) -- node[below=-2pt] {\box\xrat@below}
                                node[above=-2pt] {\box\xrat@above}
                       (\xrat@len,0) ;}}
\makeatother
 \newtheorem{theorem}{Theorem}[section]
  \AtBeginEnvironment{theorem}{\setlist[enumerate]{label=\normalfont(\roman*)}}
\newtheorem{proposition}[theorem]{Proposition}
  \AtBeginEnvironment{proposition}
        {\setlist[enumerate]{label=\normalfont(\roman*)}}
\newtheorem{lemma}[theorem]{Lemma}
  \AtBeginEnvironment{lemma}{\setlist[enumerate]{label=\normalfont(\roman*)}}

  \AtBeginEnvironment{conjecture}
        {\setlist[enumerate]{label=\normalfont(\roman*)}}
\newtheorem{corollary}[theorem]{Corollary}
  \AtBeginEnvironment{corollary}
        {\setlist[enumerate]{label=\normalfont(\roman*)}}

  \AtBeginEnvironment{claim}{\setlist[enumerate]{label=\normalfont(\roman*)}}
\theorembodyfont{\normalfont}
\theoremsymbol{\ensuremath{\diamondsuit}}

  \AtBeginEnvironment{exercise}{\setlist[enumerate]{label=(\alph*)}}
\newtheorem{remark}[theorem]{Remark}
  \AtBeginEnvironment{remark}{\setlist[enumerate]{label=(\alph*)}}

  \AtBeginEnvironment{remnot}{\setlist[enumerate]{label=(\alph*)}}

  \AtBeginEnvironment{notation}{\setlist[enumerate]{label=(\alph*)}}

  \AtBeginEnvironment{terminology}
        {\setlist[enumerate]{label=\normalfont(\alph*)}}
\newtheorem{remarks}[theorem]{Remarks}
  \AtBeginEnvironment{remarks}{\setlist[enumerate]{label=(\alph*)}}

  \AtBeginEnvironment{example}{\setlist[enumerate]{label=(\alph*)}}
\newtheorem{examples}[theorem]{Examples}
  \AtBeginEnvironment{examples}{\setlist[enumerate]{label=(\alph*)}}

  \AtBeginEnvironment{thought}{\setlist[enumerate]{label=(\alph*)}}

  \AtBeginEnvironment{convention}{\setlist[enumerate]{label=(\alph*)}}

  \AtBeginEnvironment{conventions}{\setlist[enumerate]{label=(\alph*)}}
\newtheorem{void}[theorem]{$\!\!$}
  \AtBeginEnvironment{void}{\setlist[enumerate]{label=(\alph*)}}

\newtheorem{definition}[theorem]{Definition}
  \AtBeginEnvironment{definition}{\setlist[enumerate]{label=(\roman*)}}
\theoremsymbol{\ensuremath{\square}}
\qedsymbol{\ensuremath{\square}}
\theoremstyle{nonumberplain}
\theoremheaderfont{\itshape}
\newtheorem{proof}{Proof.}

\theorembodyfont{\itshape}
\theoremheaderfont{\normalfont\bfseries}

\renewcommand*\descriptionlabel[1]{\hspace\labelsep\normalfont #1}

\makeatletter
\let\orgdescriptionlabel\descriptionlabel
\renewcommand*{\descriptionlabel}[1]{\let\orglabel\label
  \let\label\@gobble
  \phantomsection
  \edef\@currentlabel{#1}\let\label\orglabel
  \orgdescriptionlabel{#1}}
\makeatother

\setbox0=\hbox{$A$}\newdimen\axis \axis=\fontdimen22\textfont2\newbox\bullbox \setbox\bullbox=\hbox{.}
 \wd\bullbox 0pt
 \ht\bullbox 2.0pt
 \dp\bullbox 1.0pt
\newcommand{\dashed}{\ar@{-->}}

 \def\bull{\kern 1.9pt \usebox\bullbox\kern 1.1pt}
\newdir{..}{\object{\bull}}
\newcommand{\dotted}{\ar@{..>}}
\newcommand{\dline}{\ar@{..}}
\newcommand{\dar}{\ar@{..>}}
\newcommand{\dreg}{\ar@{|..>}}
\newcommand{\ddmono}{\ar@{ >-->}}
\newcommand{\dcocov}{\ar@{ |>..>}}
\newcommand{\ddepi}{\ar@{..>>}}
\newcommand{\dcov}{\ar@{..|>}}
\newcommand{\dmonepi}{\ar@{ >..>>}}

\newcommand{\midar}{\ar@{-}|-(.63)*=0@{>}}
\newcommand{\dmidar}{\ar@{..}|-(.63)*=0@{>}}
\newcommand{\midarr}{\ar@{-}|-(.7)*=0@{>>}}
\newcommand{\dmidarr}{\ar@{..}|-(.7)*=0@{>>}}

\newcommand{\dashprof}{\ar@{-->}|-*=0@{|}}
\newdir{+>}{{}*!/-4.5pt/@{>}}
\newcommand{\mar}{\ar@{-}|-*=0@{+>}}
\newcommand{\nohead}{\ar@{-}}
\newcommand{\dnohead}{\ar@{..}}
\newcommand{\emar}{\ar@{}}

\newcommand{\cart}{\ar@{~>}}
\newcommand{\depicart}{\ar@{~|>}}
\newdir{||}{{}*!/-5pt/@{}}
\newdir{ >|}{{}*!/-8pt/@{>}*!/-16pt/@{}}
\newcommand{\coca}{\ar@{||{+} >|}}
\newcommand{\arid}{\ar@{=}}

\newcommand{\uord}{uniform preorder\xspace}
\newcommand{\uords}{uniform preorders\xspace}
\newcommand{\iord}{indexed preorder\xspace}

\newcommand{\tfae}{The following are equivalent\xspace}

\newcommand{\loc}{locally ordered category\xspace}

\newcommand{\hex}{\ex\h}
\makeatletter
\DeclareFontFamily{OMX}{MnSymbolE}{}
\DeclareSymbolFont{MnLargeSymbols}{OMX}{MnSymbolE}{m}{n}
\SetSymbolFont{MnLargeSymbols}{bold}{OMX}{MnSymbolE}{b}{n}
\DeclareFontShape{OMX}{MnSymbolE}{m}{n}{
    <-6>  MnSymbolE5
   <6-7>  MnSymbolE6
   <7-8>  MnSymbolE7
   <8-9>  MnSymbolE8
   <9-10> MnSymbolE9
  <10-12> MnSymbolE10
  <12->   MnSymbolE12
}{}
\DeclareFontShape{OMX}{MnSymbolE}{b}{n}{
    <-6>  MnSymbolE-Bold5
   <6-7>  MnSymbolE-Bold6
   <7-8>  MnSymbolE-Bold7
   <8-9>  MnSymbolE-Bold8
   <9-10> MnSymbolE-Bold9
  <10-12> MnSymbolE-Bold10
  <12->   MnSymbolE-Bold12
}{}
\let\llangle\@undefined
\let\rrangle\@undefined
\DeclareMathDelimiter{\llangle}{\mathopen}{MnLargeSymbols}{'164}{MnLargeSymbols}{'164}
\DeclareMathDelimiter{\rrangle}{\mathclose}{MnLargeSymbols}{'171}{MnLargeSymbols}{'171}
\makeatother

\newcommand{\aahd}{(A,\cdot,A\hs)}

\newcommand{\dfahs}{\down\mcf_{A\hs}}
\newcommand{\nwedge}[1]{\wedge^{(#1)}}
\newcommand{\npi}[2]{\pi^{(#1)}_{#2}}
\newcommand{\ccomp}{relationally complete\xspace}
\newcommand{\ccompness}{relational completeness\xspace}

\newcommand{\Ccompness}{Relational completeness\xspace}

\begin{document} 
\title{Uniform Preorders \\ and Partial Combinatory Algebras}
\author{Jonas Frey\thanks{This work is partially supported by the Air Force
Office of Scientific Research under award number FA9550-20-1-0305, and by the
Army Research Office under award number W911NF-21-1-0121.}}
\maketitle
\begin{abstract}
\emph{Uniform preorders} are a class of combinatory representations of
$\Set$-indexed preorders that generalize Hofstra's \emph{basic relational
objects}~\cite{hofstra2006all}. An indexed preorder is representable by a
uniform preorder if and only if it has as generic predicate. We study the
$\ex$-completion of indexed preorders on the level of uniform preorders, and
identify a combinatory condition (called `relational completeness') which
characterizes those uniform preorders with finite meets whose $\ex$-completions
are triposes. The class of triposes obtained this way contains \emph{relative
realizability triposes}, for which we derive a characterization as a fibrational
analogue of the characterization of realizability toposes given in earlier
work~\cite{frey2019characterizing}.

Besides relative partial combinatory algebras, the class of relationally
complete uniform preorders contains \emph{filtered ordered} partial combinatory
algebras, and it is unclear if there are any others.
\end{abstract}

\tableofcontents

\section*{Introduction}\addcontentsline{toc}{section}{Introduction}

In his seminal article~\cite{hofstra2006all}, Pieter Hofstra gave an analysis of
\emph{filtered ordered combinatory algebras} (filtered OPCAs) in terms of the
more primitive notion of \emph{basic combinatory objects} (BCOs). These are
combinatory representations $(A,\leq,\mcf)$ of certain $\Set$-indexed preorders
by partial orders equipped with a class of partial endomaps, and Hofstra showed
that a BCO $(A,\leq,\mcf)$ arises from {a filtered OPCA}
if and only if
\begin{enumerate}
\item it is \emph{cartesian} in the sense that the associated indexed preorder
$\famf(A,\leq,\mcf)$ is an indexed meet-semilattice, and 
\item the free completion under existential quantification (`$\ex$-completion')
of $\famf(A,\leq,\mcf)$ is a tripos.
\end{enumerate}
The present work gives two variations on this theme, replacing BCOs by the more
general notion of \emph{uniform preorder} on the one hand, and by the more
restrictive notion of \emph{discrete combinatory object} on the other hand,
together fitting into a sequence
\begin{equation}
\DCO\to\BCO\to\UOrd\to\IOrd
\end{equation}
of embeddings of \emph{locally ordered categories}. A uniform preorder is a set
equipped with a monoid of \emph{binary relations} (Definition~\ref{def:uord}),
and a DCO is a set with a monoid of \emph{partial functions}
(Definition~\ref{def:discrete}\ref{def:discrete:dco}), and the locally ordered
categories $\DCO$ and $\UOrd$ have the advantage over $\BCO$ that their
bi-essential images in the locally ordered category $\IOrd$ of $\Set$-indexed
preorders admit straightforward characterizations: an indexed preorder is
representable by an uniform preorder iff it has a generic predicate
(Lemma~\ref{lem:genpred-image}), and it is representable by a DCO iff it has a
\emph{discrete} generic predicate (Corollary~\ref{cor:dco-discgen}).

After developing the basic theory of uniform preorders in
Sections~\ref{se:locat-uord}--\ref{se:ifrms}, we give a combinatorial criterion
for the $\ex$-completion of a cartesian uniform preorder to be a tripos in
Definition~\ref{def:rcomp} and Theorem~\ref{thm:rel-compl}, which we call
\emph{relational completeness}. In
Example~\ref{ex:comcomp}\ref{ex:comcomp:tripos}, relational completeness is used
to show that the $\ex$-completion of a tripos is again a tripos, and
Remark~\ref{rem:relcompl}\ref{rem:relcompl:tripos-char} gives a characterization
of the triposes that arise as $\ex$-completions of (the indexed preorders
associated to) relationally complete uniform preorders, building on a prior
characterization of $\ex$-completions in terms of \emph{$\exists$-prime
predicates} (Proposition~\ref{prop:ecomp-if-enough-primes}). This
characterization is augmented by a discreteness condition in
Theorem~\ref{thm:rtr-char} to obtain a characterization of relative
realizability triposes: 
\begin{quote}
\emph{A tripos $\trip$ is a relative realizability tripos if and only if it has
enough $\exists$-prime predicates, and the indexed sub-preorder $\prim(\trip)$
of prime predicates has finite meets and a discrete generic predicate.}
\end{quote}
In light of the close analogy between Theorem~\ref{thm:rtr-char} and
Remark~\ref{rem:relcompl}\ref{rem:relcompl:tripos-char}, relationally complete
uniform preorders could be viewed as (relative/filtered) \emph{relational PCAs}.

A central question remains open: every filtered OPCA gives rise to a
relationally complete uniform preorder, but are there any others?

\medskip

Most of the work presented here is already contained in the author's PhD
thesis~\cite{frey2013fibrational}, where the theory of uniform preorders is
developed in greater generality, including \emph{many-sorted} uniform preorders,
and without the use of the axiom of choice. To get a more accessible
presentation, we have left out the subtleties of a choice-free development here,
and focused on the single-sorted case.

\section{The locally ordered category of uniform preorders}\label{se:locat-uord}

Uniform preorders were introduced in~\cite{frey2013fibrational} as
representations of certain {$\Set$-indexed preorders} that generalize
Hofstra's \emph{basic combinatorial objects (BCOs)} \cite{hofstra2006all}.

Contrary to BCOs, for \uords there exists a straightforward characterization of
the induced class of indexed preorders, which makes the notion both conceptually
very clear and somewhat tautological. In this section we reconstruct the
definition of uniform preorders from this characterization, after fixing
terminology and notation on {locally ordered categories} and {indexed
preorders}, which constitute the central formalisms in this article.

A \emph{$\Set$-indexed preorder} is a pseudofunctor $\setord$ where $\Ord$ is
the locally ordered category of preorders and monotone maps. We view locally
ordered categories as degenerate $2$-categories, and use $2$-categorical
concepts and terminology. As we only consider indexed preorders on $\Set$ in
this paper, we omit the prefix. Given an indexed preorder $\fifp$ and a set $A$, we call
$\fifp(A)$ the \emph{fiber} of $\fifp$ over $A$, and refer to its elements as
\emph{predicates} on $A$. Given a function $f:A\to B$, the monotone map
$\fifp(f)$ is called \emph{reindexing along $f$} and abbreviated $f^*$. We
write $\IOrd$ for the locally ordered category of indexed preorders and
pseudo-natural transformations.

\emph{Strict} indexed preorders and 
transformations form a non-full locally ordered subcategory $[\Set\op,\Ord]$ of
$\IOrd$, which by a well known argument about models of geometric theories in
presheaf categories\footnote{\cite[Corollary~D1.2.14(i)]{elephant2} gives a
statement for small index categories, but smallness is not essential.} is
isomorphic to the locally ordered category $\Ord([\Set\op,\Set])$ of internal
preorders in $[\Set\op,\Set]$.

The locally ordered category $\UOrd$ of uniform preorders is now characterized
as fitting into the following strict pullback of locally ordered categories,
where $U$ sends internal preorders to underlying presheaves, the categories in
the lower line are viewed as having codiscretely ordered hom-sets (to make $U$
well-defined), $\yo$ is the Yoneda embedding, and $\famf$ is the indicated
composition.
\begin{equation}\label{eq:uord-pb}
\begin{tikzcd}[sep = small]
|[label={[overlay, label distance=-2.5mm]-45:\lrcorner}]| \UOrd
        \ar[d]
        \ar[r,"J"']
        \ar[rr, "\famf", rounded corners, 
            to path={ -- ([yshift=7pt]\tikztostart.north) 
                      -| (\tikztotarget) [near start]\tikztonodes }]
& \Ord([\Set\op,\Set])
        \ar[d,"U"]
        \ar[r,"\cong"']
&  {[\Set\op,\Ord]}
        \ar[r,hook]
&  \IOrd
\\ \Set
        \ar[r,"\yo"]
& \left[\Set\op,\Set\right]
\end{tikzcd}\end{equation} 
The 2-functor $J$ is 2-fully faithful since $\yo$ is, which means that $\UOrd$
can be identified with the 2-full subcategory of $\Ord([\Set\op,\Set])$ on
internal preorders whose underlying presheaves are representable. In other
words, a uniform preorder is a set $A$ together with an internal preorder
structure on $\yo(A)$. Such a preorder structure is given by a subfunctor of
$\yo(A)\x\yo(A)\cong\yo(A\x A)$, \ie a sieve on $A\times A$, subject to
reflexivity and transitivity conditions.

Since surjections split in $\Set$, sieves are completely determined by their
monomorphisms, or equivalently subset-inclusions, which means that a sieve on
$A\times A$ is equivalently represented as a down-closed subset of the powerset
$P(A\times A)$. We leave it to the reader to verify that unwinding the meaning
of reflexivity, transitivity, monotonicity, and the hom-set ordering in terms of
this representation of sieves yields the following concrete descriptions of the
locally ordered category $\UOrd$ and the 2-functor $\famf$.
\begin{definition}\label{def:uord}
The locally ordered category $\UOrd$ of uniform preorders and monotone maps
is defined as follows.
\begin{enumerate}
\item\label{def:uord-uord} 
A \emph{uniform preorder} is a pair $\brar$ of a set $A$ and a set 
$R \subs P(A\times A)$ of binary relations on $A$, such that
\begin{itemize}
\item $\id_A\in R$,
\item $s\circ r\in R$ whenever $r\in R$ and $s\in R$, and 
\item $s\in R$ whenever $r\in R$ and $s\subs r$.
\end{itemize}
\item\label{def:uord-mmap}
 A \emph{monotone map} between uniform preorders $\brar$ and $\brbs$ is a 
function $f:A\to B$ such that for all $r\in R$, the set 
\begin{equation}
(f\times f)[r]
\;=\; f \circ r \circ f^\circ
\;=\; \setof{(fa,fa')}{(a,a')\in r}
\end{equation}
is in $S$.
\item\label{def:uord-hom_ord}
The ordering relation $\leq$ on monotone maps  $f,g:\brar\to\brbs$ is
defined by $f\leq g$ iff the set
\begin{equation}
\image\angs{f,g}\;=\;\setof{(fa,ga)}{a\in A}
\end{equation}
is in $S$.
\end{enumerate}
\end{definition}
\begin{definition}\label{def:fam}
The 2-functor $\famf:\UOrd\to\IOrd$ 
is defined as follows.
\begin{enumerate}
\item\label{def:fam-obj}
For every \uord $\brar$, the \iord $\famf\brar$ maps
\begin{itemize}
\item\label{def:fam-mor}
sets $I$ to preorders $(A^I,\leq)$, where
$\varphi\leq\psi:I\to A$ iff
\begin{equation}\label{eq:unif-order}
\image\angs{\varphi,\psi}\;=\;\setof{(\varphi i,\psi i)}{i\in I}
\end{equation}
is in $R$, and 
\item functions $f:J\to I$ to monotone maps
$f^*:(A^J,\leq)\to(A^I,\leq)$ given by precomposition.
\end{itemize}
\item\label{def:uord-fam-mor} For every monotone map $f:\brar\to\brbs$ between
indexed preorders, the components of the indexed monotone map
$\famf(f):\famf\brar\to\famf\brbs$ are given by postcomposition.
\end{enumerate}
\end{definition}
\begin{remarks}
\begin{itemize}
\item Given a uniform preorder $\brar$ and predicates, $\vf,\psi:I\to A$, we say that
a relation $r\in R$ \emph{realizes} an inequality $\vf\leq\psi$ if
$\image\langle\vf,\psi\rangle\subs r$ (and thus $\image \langle \vf,\psi
\rangle\in R$). This is stable under reindexing: if $r$ realizes $\vf\leq\psi$
and $u:J\to I$ then $r$ realizes $u^*\vf\leq u^*\psi$.
\item The ordering on monotone maps $f,g:\brar\to\brbs$ defined
in~\ref{def:uord}\ref{def:uord-hom_ord} is the restriction of the ordering on
$\famf\brbs(A)$ as defined in~\ref{def:fam}\ref{def:fam-obj}.
\end{itemize}
\end{remarks}
\begin{definition}\label{def:basis}
A \emph{basis} for a uniform preorder $\brar$ is a subset $R_0\subs R$ of binary
relations whose \emph{down-closure} $\downcl R_0$ in $P(A\times A)$ is $R$, \ie
$R$ and $R_0$ generate the same sieve on $A\times A$. In other words, $R_0\subs
R$ is a basis of $R$ if for every $r\in R$ there is an $r_0\in R_0$ with $r\subs
r_0$. 
\end{definition}
\begin{remark}\label{rem:basis}
Given a set $A$ and a set $R_0\subs P(A\times A)$ of binary relations, 
its down-closure $R=\downcl R_0$ is a \uord structure on $A$ iff 
\begin{enumerate}
\item there exists an $r\in R_0$ with $\id_A\subs r$, and 
\item for all $r,s\in R_0$ there exists a $t\in R_0$ with $s\circ r\in t$.
\end{enumerate}
Just like continuity of functions between topological spaces, monotonicity of
functions between uniform preorders can be expressed in terms of bases.
Specifically, given uniform preorders $\brar$ and $\brbs$ with bases $R_0$ and
$S_0$, a function $f:A\to B$ is monotone iff for all $r\in R_0$ there exists an
$s\in S_0$ with $(f\times f)[r]\subs s$, and given $\varphi,\psi:I \to A$ we
have $\varphi\leq\psi$ in $\famf\brar(I)$ iff there exists an $r\in R_0$ with
$\image\angs{\varphi,\psi}\subs r$.
\end{remark}
The following lemma gives a better understanding of the combined embedding from
$\UOrd$ to $\IOrd$. Recall that a \emph{generic predicate} in an indexed
preorder $\fifa$ is a predicate $\iota\in\fifa(A)$ for some $A$, such that for
every other set $B$ and predicate $\varphi\in\fifa(B)$ there exists a function
$f:B\to A$ with $f^*\iota\cong\varphi$.
\begin{lemma}\label{lem:genpred-image}
The $2$-functor $\famf:\UOrd\to\IOrd$ is a local equivalence, and its
bi-essential image consists of the indexed preorders which admit a generic
predicate. 

Concretely, if $\fifh$ is an indexed preorder with generic predicate
$\iota\in\fifh(A)$, then the corresponding uniform preorder is given by $(A,R)$
with 
\begin{equation}\label{eq:pq}
R=\setof{r\subs A\x A}{p^*\iota\leq q^*\iota}\qquad\qquad
\begin{tikzcd}
&   r       \ar[d,hook]
            \ar[dl,"p"']
            \ar[dr,"q"]
\\  A
&   A\x A
    \ar[r,"\pi_2" near start]
    \ar[l,"\pi_1"' near start]
&   A
\end{tikzcd}
\end{equation}
where $p,q:r\to A$ are the first and second projections as in the diagram.
\end{lemma}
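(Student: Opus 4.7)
The plan is to separate the local-equivalence assertion, which is essentially built into the pullback~\eqref{eq:uord-pb}, from the bi-essential image characterization, which carries the real content. For the former, $J$ is 2-fully faithful as a strict pullback of the 2-fully faithful $\yo$, and the remaining inclusion $[\Set\op,\Ord]\incl\IOrd$ is a local equivalence by the standard strictification of preorder-valued pseudofunctors. The ``easy'' half of the image characterization is that every $\famf\brar$ admits the identity $\id_A\in\famf\brar(A)$ as a generic predicate: for any $\varphi:I\to A$, Definition~\ref{def:fam} gives $\varphi^*(\id_A)=\id_A\circ\varphi=\varphi$.

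For the converse, suppose $\fifh$ has a generic predicate $\iota\in\fifh(A)$. I would define $R$ by~\eqref{eq:pq} and verify it is a uniform preorder structure on $A$. Reflexivity is immediate: on $\id_A=\Delta_A\subs A\x A$ the two projections coincide, so $p^*\iota=q^*\iota$ and $\id_A\in R$. Down-closure is equally immediate, since reindexing $p^*\iota\leq q^*\iota$ along an inclusion $s\incl r$ transports the inequality to $s$. The interesting axiom is closure under relational composition. Given $r,s\in R$, form the pullback $r\x_A s$ with three projections $\pi_1,\pi_m,\pi_2$ to $A$; the two hypotheses chain to $\pi_1^*\iota\leq\pi_m^*\iota\leq\pi_2^*\iota$. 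The canonical epi $e:r\x_A s\epi s\circ r$ satisfies $p\circ e=\pi_1$ and $q\circ e=\pi_2$, so choosing a set-theoretic section $t$ of $e$ and reindexing along $t$ transports $\pi_1^*\iota\leq\pi_2^*\iota$ down to $p^*\iota\leq q^*\iota$ on $s\circ r$, giving $s\circ r\in R$.

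Finally, I would exhibit the equivalence $\fifh\simeq\famf\brar$ via $\Phi_I(\varphi):=\varphi^*\iota$, whose pseudo-natural structure is inherited from the coherence isos of $\fifh$. Order preservation: if $\varphi\leq\psi$ in $\famf\brar(I)$, i.e., $r:=\image\angs{\varphi,\psi}\in R$, then reindexing $p^*\iota\leq q^*\iota$ along $\angs{\varphi,\psi}:I\to r$ yields $\varphi^*\iota\leq\psi^*\iota$. Order reflection: given $\varphi^*\iota\leq\psi^*\iota$, split the canonical surjection $I\epi r=\image\angs{\varphi,\psi}$ and reindex back to deduce $r\in R$. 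Essential surjectivity in each fiber is precisely the defining property of $\iota$ as a generic predicate. The main obstacle is the composition-closure step above: descending the inequality from $r\x_A s$ to $s\circ r$ genuinely uses a splitting of a surjection, and is the technical reason, acknowledged in the introduction, why a choice-free development of the theory requires a more delicate formalism.
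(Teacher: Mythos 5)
Your treatment of the second claim is correct and in fact more explicit than the paper's. The paper simply observes that $\fifh$ is equivalent to the strict indexed preorder with underlying presheaf $\Set(-,A)$ and ordering $f\leq g$ iff $f^*\iota\leq g^*\iota$, and then invokes the identification (read off from the pullback~\eqref{eq:uord-pb}) of uniform preorders with strict indexed preorders whose underlying presheaf is representable; you instead verify the three axioms of Definition~\ref{def:uord} for the $R$ of~\eqref{eq:pq} by hand and build the equivalence $\vf\mapsto\vf^*\iota$ directly. Your composition-closure step (splitting $r\x_A s\epi s\circ r$) and your order-reflection step (splitting $I\epi\image\angs{\vf,\psi}$) are both correct, and you rightly identify these splittings as the point where choice enters. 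This half of your argument buys a self-contained, concrete verification at the cost of redoing work the paper has packaged into the abstract identification.

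The gap is in the first claim. You assert that the inclusion $[\Set\op,\Ord]\incl\IOrd$ is a local equivalence ``by the standard strictification of preorder-valued pseudofunctors.'' Strictification of pseudofunctors is a statement about \emph{objects}; what you need here is local essential surjectivity of the inclusion, i.e.\ that every pseudonatural transformation between strict indexed preorders is isomorphic to a strict one, and this morphism-level coherence statement does not hold in general. The paper only claims that $[\Set\op,\Ord]\to\IOrd$ is locally order-reflecting, and supplies the missing essential surjectivity by an argument specific to the domain $\famf\brar$: given a pseudonatural $f:\famf\brar\to\fifk$ with $\fifk$ strict, the assignment $\bar f_I(\vf)=\vf^*(f_A(\id_A))$ defines a strict transformation isomorphic to $f$, and this works precisely because the underlying presheaf of $\famf\brar$ is representable (more generally, projective), as the paper's footnote to this proof emphasizes. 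Without some such argument exploiting representability, your justification of the local-equivalence assertion is incomplete.
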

\begin{proof}
For the first claim --- since $\UOrd\to[\Set\op,\Ord]$ is an isomorphism on
$\hom$-preorders, and $[\Set\op,\Ord]\to\IOrd$ is locally order reflecting ---
it is sufficient to show that for every uniform preorder $\brar$, strict indexed
preorder $\fifk$, and pseudonatural $f:\famf\brar\to\fifk$ there exists a strict
transformation $\bar f:\famf\brar\to\fifk$ with $\bar f\cong f$. The
transformation $\bar f$ is given by $\bar f_I(\varphi:I\to A) =
\varphi^*(f_A(\id_A))$\footnote{More generally, this argument works for
pseudonatural transformations $f:\fifh\to\fifk$ between strict indexed preorders
where $\fifh$'s underlying presheaf of sets is \emph{projective}, i.e.\ a
coproduct of representables. Such indexed preorders $\fifh$ correspond to the
`many-sorted uniform preorders' studied in~\cite{frey2013fibrational}.}.

For the second claim it is clear that indexed preorders $\famf\brar$ have
generic predicates (the identity), and that this property is stable under
equivalence. Conversely, it was stated earlier that uniform preorders can be
identified with strict indexed preorders whose underlying presheaf of sets is
representable, and every indexed preorder $\fifh$ with generic predicate
$\iota\in\fifh(A)$ is equivalent to the strict indexed preorder with underlying
presheaf $\Set(-,A)$, and ordering on $\Set(I,A)$ given by $f\leq g$ iff
$f^*\iota\leq g^*\iota$. 
\end{proof}
\begin{examples}\label{ex:uords}
\begin{enumerate}
\item\label{ex:uords:canonical-indexing}The \emph{canonical indexing} of a preorder $(A,\leq)$ is the strict indexed
preorder whose underlying presheaf is the representable presheaf $\Set(-,A)$,
and whose fibers are ordered pointwise, i.e.\ $(\vf:I\to A)\leq(\psi:I\to A)$
iff $\fa i\in I\;.\;\vf(i)\leq\psi(i)$.

The corresponding uniform preorder is $(A,R_\leq)$ where $R_\leq = \down
\{\leq\}\subs P(A\x A)$.

\item\label{ex:uords:bco}Hofstra's \emph{basic combinatory objects} (BCOs)~\cite[pg.~241]{hofstra2006all}
can be embedded into uniform preorders: recall that a BCO is a triple
$(A,\leq,\mcf)$ where $(A,\leq)$ is a partial order and $\mcf$ is a set of
monotone partial endofunction with down-closed domains, which is weakly closed
under composition in the sense that
\begin{inumerate}
\item there exists an $i\in\mcf_A$ such that $i(a)\leq a$ for all $a\in A$, and
\item for all $f,g\in\mcf$ there exists $h\in\mcf$ such that $h(a)\leq g(f(a))$
whenever the right side is defined.
\end{inumerate}
Given a BCO $(A,\leq,\mcf)$, we get an indexed preorder structure on
$\Set(-,A)$ by setting
\begin{equation}
(\vf:I\to A)\leq(\psi:I\to A)\qqtext{iff}\ex f\in \mcf\sfa i\in I\qdot f(\vf(i))\leq\psi(i).
\end{equation}
Just as for the indexed preorders associated to ordinary preorders and uniform
preorders, we write $\famf(A,\leq,\mcf)$ for this indexed preorder.

The corresponding uniform preorder structure $R_\mcf$ on $A$ is generated by the
relations $\setof{r_f\subs A\x A}{f\in\mcf}$, where
$r_f=\setof{(a,b)}{f(a)\leq b}$ for $f\in\mcf$. The axioms (i), (ii) ensure that
the relations $r_f$ form a basis in the sense of Definition~\ref{def:basis}.
\end{enumerate}
\end{examples}
Hofstra defined a locally ordered category $\BCO$ of BCOs whose notion of
morphism is a bit subtle, but is justified and fully explained by the fact that
it extends the mapping $(A,\leq,\mcf)\mapsto\famf(A,\leq,\mcf)$ to a $2$-functor
$\famf:\BCO\to[\Set\op,\Ord]$ into strict indexed preorders which is
\emph{2-fully faithful}, i.e.\ a local isomorphism. Since the embeddings of
$\Ord$ and $\UOrd$ into $[\Set\op,\Ord]$ are also local isomorphisms, we obtain
a sequence
\begin{equation}\label{eq:ord-bco-uord-iord}
\Ord\to\BCO\to\UOrd\to[\Set\op,\Ord]
\end{equation}
of $2$-full embeddings of locally ordered categories.

\section{Adjunctions of uniform preorders}\label{se:uord-adj}

An {adjunction} in a \loc $\mfa$ is a pair of arrows $f:A\to B$, $g:B\to A$,
such that $\id_A\leq g\circ f$ and $f\circ g\leq \id_B$. 
Since $\UOrd\to\IOrd$ is a local equivalence, a monotone map
$f:\brar\to\brbs$ has a right adjoint in $\UOrd$ precisely if $\famf(f)$ has
a right adjoint in $\IOrd$. The following lemma gives a criterion for the
existence of right adjoints in which monotonicity does not have to be checked
explicitly.

\begin{lemma}\label{lem:adj-cond} \tfae for uniform preorders $\brar$, $\brbs$,
a monotone map $f:\brar\to\brbs$, and a function $g:B\to A$.
\begin{enumerate}
\item\label{lem:adj-cond-uord} The function $g$ is a monotone map from $\brbs$ to $\brar$, and right adjoint to
$f$.
\item\label{lem:adj-cond-cond} \begin{enumerate}[label=\normalfont(\arabic*)]
\item\label{lem:adj-cond-cond-counit}The relation $\image\angs{f\circ g,\id_B}=\setof{(f(g(b)),b)}{b\in B}$ is in
$S$, and 
\item\label{lem:adj-cond-cond-trans}for all $s\in S$, the relation $s^*
= \setof{(a,gb)}{(fa,b)\in s}$ is in $R$.
\end{enumerate}
\end{enumerate}
If $\brbs$ is given by a basis, then it is sufficient to verify
\ref{lem:adj-cond-cond-trans} on the elements of the basis.
\end{lemma}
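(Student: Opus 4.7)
My plan is to first translate the adjunction conditions in (i) into explicit membership conditions via Definition~\ref{def:uord}\ref{def:uord-hom_ord}: the unit $\id_A \leq g \circ f$ amounts to $u := \{(a, gfa) \mid a \in A\} \in R$, and the counit $f \circ g \leq \id_B$ amounts to the relation $e := \{(fgb, b) \mid b \in B\}$ of \ref{lem:adj-cond-cond-counit} being in $S$. This reduces the lemma to a purely combinatorial comparison between membership conditions in $R$ and $S$.

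For (i) $\Rightarrow$ (ii), condition \ref{lem:adj-cond-cond-counit} is the counit itself, so nothing is to be shown. For \ref{lem:adj-cond-cond-trans}, given $s \in S$ I would verify the inclusion
\[
s^* \;\subseteq\; (g\times g)[s] \circ u,
\]
by factoring each pair $(a, gb) \in s^*$ through the intermediate point $gfa$, using $(fa, b) \in s \Rightarrow (gfa, gb) \in (g\times g)[s]$. The composite on the right lies in $R$ because $u \in R$ (by the unit) and $(g\times g)[s] \in R$ (by monotonicity of $g$); down-closure of $R$ then yields $s^* \in R$.

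For (ii) $\Rightarrow$ (i), I would obtain the unit by applying \ref{lem:adj-cond-cond-trans} to $s = \id_B$, which produces exactly $u$. Monotonicity of $g$ then follows from a symmetric trick: for $s \in S$, the composite $s \circ e$ lies in $S$ by hypothesis and closure under composition, and applying \ref{lem:adj-cond-cond-trans} to $s \circ e$ yields a relation containing $(g\times g)[s]$ by direct inspection (for $(b,b')\in s$ take $a = gb$, so $(fa, b') = (fgb, b') \in s \circ e$), whence $(g\times g)[s] \in R$ by down-closure. Combined with \ref{lem:adj-cond-cond-counit} this gives $f \dashv g$ in $\UOrd$.

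The final clause about bases is an immediate consequence of the fact that $s \mapsto s^*$ preserves inclusions and of the down-closure axiom for $R$: any $s \in S$ sits below some basis element $s_0$, and then $s^* \subseteq s_0^*$, so it suffices to check \ref{lem:adj-cond-cond-trans} on elements of the basis. The only point requiring care throughout is the bookkeeping of relational compositions in the correct order; conceptually the proof is a routine 2-categorical translation exercise, with no serious obstacle.
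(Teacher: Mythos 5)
Your proof is correct, but it is organized differently from the paper's. The paper argues at the level of the associated indexed preorders: for (i)$\Rightarrow$(ii)(2) it forms the index set $I=\setof{(a,b)}{(fa,b)\in s}$ with its two projections $p,q$, observes $f\circ p\leq q$ in $\famf\brbs(I)$, and transposes across the adjunction to get $p\leq g\circ q$, which \emph{is} the statement $s^*\in R$; for (ii)$\Rightarrow$(i) it verifies that $g\circ h$ is a greatest element of $\setof{k}{f\circ k\leq h}$ in each fiber, and then leans on the previously established fact that $\famf$ is a local equivalence to transport the adjunction (and in particular the monotonicity of $g$) back to $\UOrd$. You instead stay entirely inside $P(A\times A)$ and $P(B\times B)$: the inclusion $s^*\subseteq (g\times g)[s]\circ u$ replaces the transposition argument, the specialization $s=\id_B$ produces the unit, and the composite $s\circ e$ together with the inclusion $(g\times g)[s]\subseteq (s\circ e)^*$ gives a direct combinatorial proof that $g$ is monotone --- a point the paper handles only implicitly via the local equivalence. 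The two computations are cognate (the paper's $I$ is your relation $s$, and $p\leq g\circ q$ is exactly $s^*\in R$), but your version is more self-contained and makes the monotonicity of $g$ explicit, at the cost of some relational bookkeeping; the paper's version buys conceptual economy by reusing the $\UOrd\to\IOrd$ comparison. Your treatment of the basis clause via monotonicity of $s\mapsto s^*$ and down-closure of $R$ matches what the paper leaves unproved and is correct.
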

\begin{proof}
First assume \ref{lem:adj-cond-uord}. Condition~\ref{lem:adj-cond-cond-counit}
is equivalent to $f\circ g\leq \id_B$ by~\eqref{eq:unif-order}. For
condition~\ref{lem:adj-cond-cond-trans}, let $I=\setof{(a,b)\in A\times
B}{(fa,b)\in s}$, and let $p:I\to A$ and $q:I \to B$ be the projections. Then we
have $f\circ p\leq q$ in $\famf\brbs(I)$ by direct verification, and therefore
$p\leq g\circ q$ in $\famf\brar(I)$ by exponential transpose. the latter 
is equivalent to the claim.

Conversely, assume \ref{lem:adj-cond-cond}. To see that postcomposition with $g$
induces a left adjoint to $\famf(f):\famf\brar\to\famf\brbs$, it is enough to
check that for all sets $I$ 
and $h:I\to B$, the function $g\circ h$ is a
greatest element of 
\begin{equation}
\Phi=\setof{k:I\to A}{f\circ k \leq h}\subs \famf\brar(I).
\end{equation}
We have $g\circ h\in \Phi$ by \ref{lem:adj-cond-cond-counit}. 
To show that it is a greatest element we have to show that 
$f\circ k \leq h$
implies 
$k\leq g\circ h$, 
which follows from 
\ref{lem:adj-cond-cond-trans} since 
\begin{equation}
\image\angs{k,g\circ h}\subs\image\angs{f\circ k,h}^*
\end{equation}
and $R$ is down-closed.
\end{proof}

\section{Cartesian uniform preorders}\label{se:cart-uord}

The full subcategory of $\IOrd$ on indexed preorders admitting a generic
predicate is closed under small $2$-products: if $\fami{\fifh_k}{k\in K}$ is
a family of indexed preorders with generic predicates
$\fami{\iota_k\in\fifh_k(A_k)}{k\in K}$, then a generic predicate of the
(pointwise) product $\prod_{k\in K}\fifh_k$ is given by the family
\begin{equation}
\textstyle\fami{\pi_k^*\iota_k}{k\in K}\in\prod_{k\in K}\fifh_k(\prod_{k\in K}A_k).
\end{equation}
Thus, $\UOrd$ has products which are preserved by $\famf:\UOrd\to\IOrd$.
Concretely, the terminal uniform preorder is the singleton set with the unique
uniform preorder structure, and a product of $\brar$ and $\brbs$ is given by
$(A\x B,R\ox S)$, where $R\ox S$ is the uniform preorder structure generated by
the basis $\setof{r\x s}{r\in R,s\in S}$. 
\begin{definition}
An object $A$ of a locally ordered category $\mfa$ with finite $2$-products is
called \emph{cartesian} if the terminal projection $A\to 1$ and the diagonal
$A\to A\x A$ have right adjoints $\top:1\to A$ and $\wedge:A\to A\x A$.

Given cartesian objects $A,B$, a \emph{morphism} $f:A\to B$ is called cartesian
if the diagrams 
\begin{equation}
\begin{tikzcd}
    A\x A
        \ar[r,"f\x f"']
        \ar[d,"\wedge"']
&   B\x B
        \ar[d,"\wedge"]
\\  A
        \ar[r,"f"]
&   B
\end{tikzcd}
\qquad\qquad
\begin{tikzcd}
    1       \ar[dr,"\top"]
            \ar[d,"\top"']
&   {}
\\  A       \ar[r,"f"]
&   B
\end{tikzcd}
\end{equation}
commute up to isomorphism.
\end{definition}
Since $\UOrd\to\IOrd$ is a local equivalence and preserves (finite)
$2$-products, a uniform preorder $\brar$ is cartesian if and only if
$\famf\brar$ is cartesian, and the latter is easily seen to be equivalent to
$\famf\brar$ being an \emph{indexed meet-semilattice}, i.e.\ an indexed preorder
whose fibers have finite meets, which are preserved by reindexing. Instantiating
Lemma~\ref{lem:adj-cond} we get the following characterization.
\begin{lemma}\label{lem:cart-cond}A uniform preorder $\brar$ is cartesian if and only if there exists a function
$\wedge:A\times A\to A$ and an element $\top\in A$ such that the relations
\begin{equation}
       \tau     = \setof{(a,\top)}{a\in A}
\qquad \lambda  = \setof{(a\wedge b,a)}{a,b\in A}
\qquad \rho     = \setof{(a\wedge b,b)}{a,b\in A}
\end{equation}
are in $R$, and for all $r,s\in R$ the relation
\begin{equation}
\llangle r,s\rrangle
\;:=\;\wedge\circ(r\x s)\circ\delta_A
\;=\;\setof{(a,b\wedge c)}{(a,b)\in r,(a,c)\in s}
\end{equation}
is in $R$.
\qed
\end{lemma}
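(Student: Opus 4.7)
The plan is to apply Lemma~\ref{lem:adj-cond} componentwise to the two adjunctions whose existence defines cartesianness, namely $!_A \dashv \top$ and $\Delta_A \dashv \wedge$, and to unpack the abstract conditions \ref{lem:adj-cond-cond-counit}, \ref{lem:adj-cond-cond-trans} on suitable bases.

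First I would recall, from the discussion preceding the lemma, that $\brar$ is cartesian exactly when the terminal projection $!:A\to 1$ and the diagonal $\Delta:A\to A\x A$ admit right adjoints; these right adjoints, as monotone maps, are necessarily functions $\top:1\to A$ and $\wedge:A\x A\to A$. So the task reduces to translating the existence of these adjunctions, via Lemma~\ref{lem:adj-cond}, into combinatorial conditions on $R$.

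For $!\dashv\top$: the uniform preorder on $1$ is the maximal one, so condition~\ref{lem:adj-cond-cond-counit} is automatic, and the only non-trivial instance of~\ref{lem:adj-cond-cond-trans} (applied to the basis consisting of the full relation $\{(*,*)\}$ on $1$) says precisely that $\{(a,\top)\mid a\in A\}=\tau\in R$. For $\Delta\dashv\wedge$: using the basis $\{r\x s\mid r,s\in R\}$ of $R\ox R$ on $A\x A$, the counit condition~\ref{lem:adj-cond-cond-counit} says that $\image\angs{\Delta\wedge,\id_{A\x A}}=\{((a\wedge b,a\wedge b),(a,b))\mid a,b\in A\}$ lies in $R\ox R$, and one checks directly that this set is contained in $\lambda\x\rho$, so this is equivalent to $\lambda,\rho\in R$. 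The transposition condition~\ref{lem:adj-cond-cond-trans}, applied to a basis element $s=r_1\x r_2$, yields exactly $s^*=\{(a,b_1\wedge b_2)\mid (a,b_1)\in r_1,(a,b_2)\in r_2\}=\llangle r_1,r_2\rrangle$; by the last sentence of Lemma~\ref{lem:adj-cond} it suffices to check this on the basis, giving the stated condition $\llangle r,s\rrangle\in R$ for all $r,s\in R$.

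The only genuine point to be careful about is the direction of the implication in~\ref{lem:adj-cond-cond-counit} for the diagonal: one must verify that the set $\{((a\wedge b,a\wedge b),(a,b))\}$ being inside $\lambda\x\rho$ really does follow from $\lambda,\rho\in R$ (and conversely that the counit condition forces $\lambda,\rho\in R$, which follows because projecting to each factor of $R\ox R$ recovers $\lambda$ and $\rho$ respectively up to down-closure). Once this bookkeeping is in place, combining both applications of Lemma~\ref{lem:adj-cond} yields the equivalence claimed. Nothing else in the statement requires an independent monotonicity check for $\top$ or $\wedge$, as this is built into Lemma~\ref{lem:adj-cond}.
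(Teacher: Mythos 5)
Your proposal is correct and follows exactly the route the paper intends: the lemma is stated with its proof omitted precisely because it is the instantiation of Lemma~\ref{lem:adj-cond} at the two adjunctions $!_A\dashv\top$ and $\Delta_A\dashv\wedge$, using the basis $\setof{r\x s}{r,s\in R}$ for the product, which is what you carry out. Your bookkeeping (the counit condition for the diagonal being equivalent to $\lambda,\rho\in R$ via containment in $\lambda\x\rho$, and condition~\ref{lem:adj-cond-cond-trans} on basis elements $r_1\x r_2$ giving exactly $\llangle r_1,r_2\rrangle$) checks out.
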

\begin{examples}\label{ex:cuords}
\begin{enumerate}
\item\label{ex:cuords:ord} The canonical indexing of a preorder $(A,\leq)$ is an
indexed meet-semilattice if and only if $(A,\leq)$ is an meet-semilattice if and
only if the uniform preorder $(A,\down\{\leq\})$ is cartesian. This follows
since $\Ord\to\UOrd$ is $2$-fully faithful and preserves finite $2$-products.
\item\label{ex:cuord:prim} The primitive recursive functions $f:\N\to\N$ form a
basis (Definition~\ref{def:basis}) for a uniform preorder structure on $\N$
which is cartesian: $\top$ is given by $0$ (or any other number), and a meet
operation $\wedge:\N\x\N\to\N$ is given by any primitive recursive pairing
function.
\item\label{ex:cuords:rec} Instead of primitive recursive function, we can use
total recursive, or even partial recursive functions in the previous example.
The last option gives an instance of the concept of \emph{partial combinatory
algebra}, to which we will come back later.
\end{enumerate}
\end{examples}
\begin{remark}
The forgetful functor from cartesian uniform preorders to uniform preorders does
not have a left biadjoint. This is because the meet-completion of an indexed
preorder with generic predicate does generally not have a generic predicate. The
situation is different for existential quantification, which we treat next.
\end{remark}

\section{Existential quantification}\label{se:exl-quant}

\begin{definition}\label{def:ex-iord}
\begin{enumerate}
\item\label{def:ex-iord:has-ex}We say that an indexed preorder $\fifh$ \emph{has existential quantification},
if for every function $u:J\to I$, the monotone map $u^*:\fifh(I)\to\fifh(J)$ has
a left adjoint $\ex_u:\fifh(J)\to\fifh(I)$, and the \emph{Beck--Chevalley
condition} holds: for every pullback \begin{equation}
\begin{tikzcd}[sep = small]
|[label={[overlay, label distance=-2.5mm]-45:\lrcorner}]| L
        \ar[r,"\bar u"]
        \ar[d, "\bar v"']
&   K
        \ar[d, "v"]
\\  J
        \ar[r, "u"]
&   I
\end{tikzcd}
\end{equation}
in $\Set$ we have $u^*\circ\ex_v\cong \ex_{\bar v}\circ \bar u^*$. 
\item\label{def:ex-iord:commutes-ex}We say
that an indexed monotone map $f:\fifh\to\fifk$ \emph{commutes with existential
quantification}, if $f_I\circ \ex_u\cong\ex_u\circ f_J$ for all $u:J\to I$.

We write $\ex\h\IOrd$ for the sub-2-category of $\IOrd$ on indexed preorders
with existential quantification, and indexed monotone maps commuting with
existential quantification, and we write $\ex\h\UOrd$ for the corresponding
sub-2-category of $\UOrd$, given by the following pullback.
\begin{equation}
\begin{tikzcd}[sep = small]
|[label={[overlay, label distance=-2.5mm]-45:\lrcorner}]|
    \ex\h\UOrd
    \ar[r,""']
    \ar[d,hook]
&\ex\h\IOrd
    \ar[d,hook]
\\ \UOrd
    \ar[r,"\famf"]
& \IOrd
\end{tikzcd}
\end{equation}

\item\label{def:ex-iord:ex-compl} An indexed monotone map $f:\fifa\to\fifh$ from
an indexed preorder $\fifa$ to an indexed preorder $\fifh$ with existential
quantification is called an \emph{\ecompletion}, if for all indexed preorders
$\fifk$ with existential quantification, the precomposition map
\begin{equation}
(-\circ f)\;:\;\hex\IOrd(\fifh,\fifk)\sto\IOrd(\fifa,\fifk)
\end{equation}
is an equivalence of preorders.
\item\label{def:ex-iord:ex-prime} Given a uniform preorder $\fifh$ with
existential quantification, a predicate $\pi\in\fifh(I)$ is called
\emph{\eprime} if for all functions $I\xleftarrow{u}J\xleftarrow{v}K$ and
predicates $\varphi\in\fifh(K)$ with $u^*\pi\leq\ex_v\varphi$, there exists a
function $s: J\to K$ such that $v\circ s=\id_J$ and $u^*\pi\leq s^*\varphi$.

We write $\prim(\fifh)$ for the indexed sub-preorder of $\fifh$ on
$\exists$-prime predicates. 

We say that $\fifh$ has \emph{enough} $\exists$-prime predicates if for every
set $I$ and $\vf\in\fifh(I)$ there exists a $u:J\to I$ and a
$\pi\in\prim(\fifh)(J)$ such that $\ex_u\pi\cong\vf$.
\end{enumerate}
\end{definition}
\begin{remark}\label{rem:prime-fib}
 Using the fibrational---rather than the
indexed---point of view, we can give the following characterization of
$\exists$-prime predicates: $\pi\in\fifh(I)$ is $\exists$-prime iff for all
$f:J\to I$, the object $(J,f^*\pi)$ of the total category $\gcons\fifh$ has the
left lifting property w.r.t.\ cocartesian arrows.
\end{remark}
The notion of $\exists$-prime predicate gives rise to a sufficient criterion for
an indexed preorder with existential quantification to be a $\ex$-completion.
\begin{proposition}\label{prop:ecomp-if-enough-primes}Let $\fifh$ be an indexed preorder with existential quantification, and assume
that $\fifa\subs\fifh$ is an indexed sub-preorder such that
\begin{enumerate}
\item all predicates in $\fifa$ are $\exists$-prime in $\fifh$, and 
\item for every set $I$ and predicate $\vf\in\fifh(I)$ there exists a function
$u:J\to I$ and a predicate $\pi\in\fifa(J)$ such that $\vf\cong\ex_u\pi$.
\end{enumerate}
Then the inclusion $\fifa\incl\fifh$ is an $\ex$-completion, and moreover
$\fifa\incl\prim(\fifh)$ is an equivalence, i.e.\ every $\exists$-prime
predicate in $\fifh$ is isomorphic to one in $\fifa$. In particular, if $\fifh$
has enough $\exists$-prime predicates, then $\prim(\fifh)\incl\fifh$ is an
$\ex$-completion.
\end{proposition}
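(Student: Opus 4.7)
The plan is to establish the $\ex$-completion statement by constructing, for every indexed preorder $\fifk$ with existential quantification and every indexed monotone map $g: \fifa \to \fifk$, an essentially unique extension $\bar g: \fifh \to \fifk$ commuting with existential quantification whose restriction to $\fifa$ agrees with $g$. The equivalence $\fifa \incl \prim(\fifh)$ then follows by a direct application of $\ex$-primality, and the ``in particular'' clause by taking $\fifa = \prim(\fifh)$ in the main statement, since $\prim(\fifh)$ is closed under reindexing and trivially satisfies hypothesis~(i), and ``enough primes'' is precisely hypothesis~(ii).

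For the extension, given $\vf \in \fifh(I)$, use hypothesis~(ii) to pick $u: J \to I$ and $\pi \in \fifa(J)$ with $\vf \cong \ex_u \pi$, and set $\bar g(\vf) := \ex_u g(\pi)$. The main obstacle is well-definedness: if also $\vf \cong \ex_v \sigma$ with $v: K \to I$ and $\sigma \in \fifa(K)$, we need $\ex_u g(\pi) \cong \ex_v g(\sigma)$. The key move invokes $\ex$-primality of $\pi$ (hypothesis~(i)): by the unit of adjunction and Beck--Chevalley applied to the pullback of $v$ along $u$, we have $\pi \leq u^* \vf \cong u^* \ex_v \sigma \cong \ex_{\bar v} \bar u^* \sigma$, where $\bar u: L \to K$ and $\bar v: L \to J$ are the legs of the pullback square. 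Primality then yields a section $s: J \to L$ of $\bar v$ such that $\pi \leq (\bar u \circ s)^* \sigma$; writing $t := \bar u \circ s: J \to K$ we obtain $v \circ t = u$ and $\pi \leq t^* \sigma$. Applying $g$ (which commutes with reindexing up to iso), then using $\ex_u = \ex_v \circ \ex_t$ and the counit of $\ex_t \dashv t^*$, gives $\ex_u g(\pi) \leq \ex_v g(\sigma)$; the reverse inequality follows symmetrically.

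Monotonicity of $\bar g$ on each fiber, naturality under reindexing, and commutation with existential quantification all reduce to variants of the same primality plus Beck--Chevalley argument applied to suitable cover representations, so I expect to bundle them together once the well-definedness lemma is in place. Agreement with $g$ on $\fifa$ is immediate since any $\pi \in \fifa$ satisfies $\pi \cong \ex_{\id} \pi$. For the full faithfulness of precomposition with $\fifa \incl \fifh$, given $\bar g, \bar h: \fifh \to \fifk$ commuting with $\ex$ and with $\bar g|_\fifa \leq \bar h|_\fifa$, for any $\vf \cong \ex_u \pi$ with $\pi \in \fifa$ we compute $\bar g(\vf) \cong \ex_u \bar g(\pi) \leq \ex_u \bar h(\pi) \cong \bar h(\vf)$, which suffices.

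For the equivalence $\fifa \incl \prim(\fifh)$, let $\tau \in \fifh(I)$ be $\ex$-prime. By hypothesis~(ii), $\tau \cong \ex_u \pi$ with $\pi \in \fifa(J)$ for some $u: J \to I$. Applying $\ex$-primality of $\tau$ to the inequality $\id_I^* \tau \leq \ex_u \pi$ produces a section $s: I \to J$ of $u$ with $\tau \leq s^* \pi$; the reverse inequality $s^* \pi \leq s^* u^* \ex_u \pi \cong \ex_u \pi \cong \tau$ is automatic from $u \circ s = \id_I$ together with the counit, so $\tau \cong s^* \pi$ lies in $\fifa(I)$ by closure of $\fifa$ under reindexing.
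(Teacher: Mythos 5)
Your proposal is correct and follows essentially the same route as the paper: you define the extension by $\bar g(\vf)=\ex_u g(\pi)$ for a chosen presentation $\vf\cong\ex_u\pi$ with $\pi\in\fifa$, and you identify $\fifa$ with $\prim(\fifh)$ via the same section-extraction argument. The only difference is that you spell out the well-definedness and (pseudo)functoriality checks that the paper dismisses as straightforward, using $\exists$-primality together with Beck--Chevalley exactly as intended.
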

\begin{proof}
Given an indexed preorder $\hypk$ with existential quantification and an indexed
monotone map $f:\fifa\to\hypk$, define $\tilde f:\fifh\to\fifk$ by $\tilde
f_I(\vf)=\ex_uf(\pi)$ for a choice of function $u:J\to I$ and predicate
$\pi\in\fifa(J)$ with $\ex_u\pi\cong\vf$. It is straightforward to verify that
$\tilde f$ gives a well defined indexed monotone map commuting with existential
quantification, and the assignment $f\mapsto\tilde f$ gives a pseudoinverse to
the restriction map $\hex\IOrd(\fifh,\fifk)\sto\IOrd(\fifa,\fifk)$.

Now assume that $\pi\in\prim(\fifh)(I)$, and choose $u:J\to I$ and
$\sigma\in\fifa(J)$ with $\ex_u\sigma\cong\pi$. Then from $\pi\leq\ex_u\sigma$
it follows that there exists a section $s$ of $u$ with $\pi\leq s^*\sigma$. On
the other hand, the inequality $\ex_u\sigma\leq\pi$ is equivalent to $\sigma\leq
u^*\pi$, which implies $s^*\sigma\leq \pi$ by applyinng $s^*$ on both sides, and
we conclude that $s^*\sigma\cong\pi$.
\end{proof}
\begin{definition}
A \emph{primal $\ex$-completion} is an $\ex$-completion $e:\fifa\to\fifh$
fitting the hypotheses of Proposition~\ref{prop:ecomp-if-enough-primes}, i.e.\
$\fifh$ has enough $\exists$-primes and $e$ is equivalent to
$\prim(\fifh)\incl\fifh$.
\end{definition}
It is well known that indexed preorders on \emph{small} index categories $\CC$
always admit primal $\ex$\=/completions\footnote{For accounts of closely related
constructions see e.g.\ \cite[Definition~3.4.5]{frey2013fibrational} for the
$\ex$-completion of fibered preorders satisfying a stack-condition,
\cite[Section~4]{trotta2020existential} for $\ex$-completion of indexed
meet-semilattices, and Hofstra~\cite[Section~3.2]{hofstra2011dialectica} for the
analogous construction for non-posetal fibrations.}: given an indexed preorder
$\fifa:\CC\op\to\Ord$, predicates on $I\in\CC$ in its $\ex$-completion
$D\fifa:\CC\op\to\Ord$ are given by pairs $(J\xto{u}I,\vf\in\fifa(J))$, where
$(J\xto{u}I,\phi)\leq(K\xto{v}I,\psi)$ iff there exists a $w:J\to K$ such that
$v\circ w=u$ and $\vf\leq w^*\psi$. However, for indexed preorders on $\Set$
this construction may not be well-defined, since the resulting indexed preorder
may have large fibers. In the following we show that indexed preorders arising
from uniform preorders \emph{do} always admit primal $\ex$-completions, which
are again representable by uniform preorders (the question if there are
non-primal $\ex$-completions over $\Set$ remains open).
\begin{definition}\label{def:dar}
For $\brar$ a uniform preorder, we define the uniform preorder 
\begin{equation}
D\brar\,=\,(PA,DR)
\end{equation}
where $PA$ is the powerset of $A$, and $DR$ is the uniform preorder structure on
$PA$ generated by the basis of relations 
\begin{equation}
[r]\,=\, \setof{(U,V)\in PA\x PA}{\fa
a\in U\sex b\in V\;.\; (a,b)\in r}
\end{equation}
for $r\in R$.
\end{definition}
\begin{remarks}\label{rem:dar}\begin{enumerate}
\item\label{rem:dar:is-basis}The relations $[r]$ do indeed constitute a basis since
$\id_{PA}\subs[\id_A]$ and $[s]\circ [r]\subs[s\circ r]$ for $r,s\in R$.
\item\label{rem:dar:fiberwise-order}Unwinding the definition of $D\brar$ we see that for $\varphi,\psi:I\to PA$ we
have $\varphi\leq\psi$ in $\famf(D\brar)(I)$ if and only if there exists an
$r\in R$ such that 
\begin{empheq}{equation}
\fa i\in I \sfa a\in \varphi(i)\sex b\in\psi(i)\qdot (a,b)\in r.
\end{empheq}
\end{enumerate}
\end{remarks}
\begin{proposition}\label{prop:dar-primal-ecomp}For every uniform preorder $\brar$, the indexed preorder $\famf(D\brar)$ has
existential quantification and the singleton map $\eta:A\to PA$ is monotone
from $(A,R)$ to $D(A,R)$. The induced indexed monotone map
$\famf(\eta):\famf\brar\to\famf(D\brar)$ is a primal $\ex$-completion.

\end{proposition}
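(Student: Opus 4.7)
The plan is to build $\ex$-quantification in $\famf(D\brar)$ pointwise by taking unions, verify that the singleton map is monotone by direct inspection of the basis of $DR$, and then apply Proposition~\ref{prop:ecomp-if-enough-primes} with $\fifa$ the (replete) image of $\famf(\eta)$ inside $\famf(D\brar)$.

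First, for a function $u:J\to I$ I would define $\ex_u:\famf(D\brar)(J)\to\famf(D\brar)(I)$ by $(\ex_u\vf)(i)=\bigcup_{u(j)=i}\vf(j)$. Monotonicity and adjunction to $u^*$ are immediate from Remark~\ref{rem:dar}\ref{rem:dar:fiberwise-order}: a realizer $r\in R$ witnessing $\vf\leq u^*\psi$ also witnesses $\ex_u\vf\leq\psi$ and vice versa, since the outer quantifier over $i\in I$ in the definition of the order factors as ``for each $i$, for each $j$ with $u(j)=i$''. Beck--Chevalley for a pullback square is verified by a direct calculation: both $u^*\ex_v\chi$ and $\ex_{\bar v}\bar u^*\chi$ evaluated at $j\in J$ yield $\bigcup_{v(k)=u(j)}\chi(k)$, identified via the pullback bijection. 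Monotonicity of $\eta:A\to PA$ from $(A,R)$ to $D(A,R)$ follows from Remark~\ref{rem:basis}, since $(\eta\x\eta)[r]=\setof{(\{a\},\{a'\})}{(a,a')\in r}\subs[r]$ for every $r\in R$.

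For the $\ex$-completion claim I would invoke Proposition~\ref{prop:ecomp-if-enough-primes} with $\fifa$ the indexed sub-preorder of $\famf(D\brar)$ on those predicates $I\to PA$ that are isomorphic to singleton-valued ones, i.e.\ to $\eta\circ\pi$ for some $\pi\in\famf\brar(I)$. The generation condition is witnessed by the canonical presentation: given $\psi:I\to PA$, set $J=\setof{(i,a)}{a\in\psi(i)}$, $u=\pi_1:J\to I$ and $\pi=\pi_2:J\to A$; then by construction $\ex_u(\eta\circ\pi)(i)=\setof{a}{a\in\psi(i)}=\psi(i)$, so $\ex_u(\eta\circ\pi)=\psi$ on the nose.

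The main work is showing that every predicate of the form $\eta\circ\pi$ with $\pi:I\to A$ is $\exists$-prime in $\famf(D\brar)$, and this is the step I expect to be the principal obstacle since it is where the axiom of choice enters. Suppose $I\xleftarrow{u}J\xleftarrow{v}K$ and $\vf:K\to PA$ satisfy $u^*(\eta\circ\pi)\leq\ex_v\vf$ in $\famf(D\brar)(J)$. Unwinding using Remark~\ref{rem:dar}\ref{rem:dar:fiberwise-order}, this means there is an $r\in R$ such that for every $j\in J$ there exists $k\in K$ with $v(k)=j$ and $b\in\vf(k)$ with $(\pi(u(j)),b)\in r$. Choosing for each $j$ such a $k=:s(j)$ defines a section $s:J\to K$ of $v$, and the same $r$ witnesses $u^*(\eta\circ\pi)\leq s^*\vf$ since $\pi(u(j))$ is $r$-related to an element of $\vf(s(j))$. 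This gives the prime condition and concludes the proof; Proposition~\ref{prop:ecomp-if-enough-primes} then yields that $\famf(\eta)$ is a primal $\ex$-completion.
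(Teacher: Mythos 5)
Your proposal is correct and follows essentially the same route as the paper: union-valued $\ex_u$, monotonicity of $\eta$ via the basis $[r]$, and an application of Proposition~\ref{prop:ecomp-if-enough-primes} to the sub-preorder of singleton-valued predicates, with the section in the primality argument produced by choice (you even handle the general form $I\xleftarrow{u}J\xleftarrow{v}K$ of the prime condition directly, where the paper tacitly reduces to the case $u=\id$ using that reindexings of singleton-valued predicates are singleton-valued). The only step you omit is that $\famf(\eta)$ is fiberwise order-\emph{reflecting} --- needed so that $\famf\brar$ is actually equivalent to your $\fifa$ and the conclusion transfers from the inclusion $\fifa\incl\famf(D\brar)$ to $\famf(\eta)$ itself --- but this is immediate from Remark~\ref{rem:dar}\ref{rem:dar:fiberwise-order}.
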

\begin{proof}
Existential quantification in $\famf(D\brar)$ is given by union, i.e.\
\begin{equation}
\textstyle(\ex_u\vf)(i)=\bigcup_{u(j)=i}\vf(j)
\end{equation}
for $u:J\to I$ and $\vf:J\to PA$, and $\eta$ is monotone since for every $r\in
R$ we have 
\begin{equation}
\setof{(\{a\},\{a'\})}{(a,a')\in r}\,\subs\,[r].
\end{equation}
To show that $\famf(\eta)$ is a primal $\ex$-completion it remains to show that
it is fiberwise order reflecting, and its image in $\famf(D\brar)$---the indexed
sub-preorder of \emph{singleton-valued predicates}, i.e.\ predicates factoring
through $\eta:A\to PA$---satisfies the hypotheses of
Proposition~\ref{prop:ecomp-if-enough-primes}.

The fact that $\famf(\eta)$ is order reflecting follows immediately from the
explicit description of the fiberwise ordering in $\famf(D\brar)$ in
Remark~\ref{rem:dar}\ref{rem:dar:fiberwise-order}.

To see that singleton-valued predicates are $\exists$-prime in $\famf(D\brar)$,
assume $\vf:I\to A$, $\psi: J\to PA$, and $u:J\to I$ such that $\eta\circ
\vf\leq\ex_u\psi$. Unwinding definitions this means that there exists an $r\in
R$ such that 
\begin{equation}
\textstyle\fa i\in I\sfa a\in \{\vf(i)\}\sex b\in\bigcup_{u(j)=i}\psi(j)\;.\; 
(a,b)\in r\,,\, 
\end{equation}
i.e.\ 
\begin{equation}
\textstyle\fa i\in I\sex j\in J\;.\; u(j)=i\wedge\ex b\in \psi(j)\;.\;
    (\vf(i),b)\in r\,,\, 
\end{equation}
and the required section of $u$ is given by a Skolem function for the first two
quantifiers. 

Finally, $\famf(D\brar)$ has `enough' singleton-valued predicates, since every
predicate $\vf:I\to PA$ can be written as $\vf=\ex_u\sigma$ for $J=\coprod_{i\in
I}\vf I$, $u$ the first projection, and
$\sigma=(J\stackrel{\pit}{\to}A\stackrel{\eta}{\to}PA)$.
\end{proof}
\begin{remark}\label{rem:d-lax-idempotent} The assignment $\brar\mapsto D\brar$
gives rise to a left $2$-adjoint to the inclusion $\ex\h\UOrd\to\UOrd$, and the
unit $\eta$ and multiplication $\mu$ of the induced $2$-monad $D:\UOrd\to\UOrd$
are componentwise given by singleton map and union. The $2$-monad is \emph{lax
idempotent}\footnote{Lax idempotent monads were introduced
in~\cite{zoeberlein1976doctrines, kock1995monads} and are also known as
\emph{Kock-Zöberlein monads}. (The articles were published 19 years apart, but
Kock's preprint seems to have been contemporaneous with Zöberlein's thesis, on
which his article is based. The name \emph{lax idempotent} is due to Zöberlein
and was later picked up by Kelly and Lack~\cite{kelly1997property}.)} in the
sense that $D\eta_{\brar}\adj\mu_\brar\adj\eta_{D\brar}$ for all uniform
preorders $\brar$. In particular, a uniform preorder $\brar$ is a $D$-algebra
iff $\eta_\brar$ has a left adjoint (the adjunction is then automatically a
reflection, since $\famf(\eta_\brar)$ is fiberwise order-reflecting). Finally,
the adjunction is monadic, since reflective indexed sub-preorders of indexed
preorders with existential quantification have existential quantification. 
\end{remark}

\section{Indexed frames}\label{se:ifrms}

We recall the definition of indexed frames from~\cite{frey2023categories}.\begin{definition}
An \emph{indexed frame} is an indexed meet-semilattice $\fifh$ which has
existential quantification and moreover satisfies the \emph{Frobenius
condition}: for all functions $u:J\to I$, and predicates $\varphi\in\fifh_I$ and
$\psi\in\fifh_J$ we have $\varphi\wedge\ex_u\psi\cong\ex_u(u^*\vf\wedge\psi)$.
\end{definition}
\begin{examples}
\begin{enumerate}\label{ex:iframes}
\item
The canonical indexing of a poset $(A,\leq)$ is an indexed frame if and only if
$A$ is a {frame}~\cite{Picado2012frames}, i.e.\ a complete lattice satisfying
the infinitary distributive law $a\wedge\bigvee_ib_i=\bigvee_ia\wedge b_i$.
\item If $(L,\leq)$ is a frame and $M$ is a monoid of frame-endomorphisms (i.e.\
monotone maps preserving finite meets and arbitrary joins), we obtain an indexed
frame structure on the representable functor $\Set(-,L)$ by setting
\begin{equation}
\vf\leq\psi\qqtext{if and only if}\ex m\in M\sfa i\in I\;.\; 
m(\vf(i))\leq\psi(i)
\end{equation}
for $\vf,\psi:I\to L$. This indexed frame structure is only representable by an
ordinary frame if $M$ has a least element (which is then an `interior operator'
i.e.\ a posetal comonad). A non-trivial example is the \emph{Lipschitz
hyperdoctrine} which has been recently proposed by Reid Barton and Johann
Commelin, and is obtained by taking $L=([0,\infty],\geq)$ and $M=\R_{>0}$ acting
by multiplication. See also~\cite{figueroa2022topos} for similar constructions
of non-$\Set$-based indexed preorders.
\end{enumerate}
\end{examples}
Another way of producing indexed frames is given by the following.
\begin{proposition}\label{prop:dar-ifrm} If $\brar$ is cartesian then so are
$D\brar$ and $\eta:\brar\to D\brar$, and moreover $\famf(D\brar)$ is an indexed
frame.
\end{proposition}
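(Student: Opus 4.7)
The plan is to verify the three claims using Lemma~\ref{lem:cart-cond} for cartesianness, Remark~\ref{rem:basis} to reduce checks to the basis $\{[r]\msep r\in R\}$ of $DR$, and the explicit description of existential quantification from Proposition~\ref{prop:dar-primal-ecomp}. The concrete candidate structure on $D\brar$ is $\top_{PA}:=\{\top\}$ and $U\wedge_{PA}V:=\{a\wedge b\msep a\in U,\,b\in V\}$, with $\top$ and $\wedge$ picked from the given cartesian structure on $\brar$.

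For the cartesianness of $D\brar$, I would first observe the inclusions $\tau_{PA}\subs[\tau]$, $\lambda_{PA}\subs[\lambda]$, $\rho_{PA}\subs[\rho]$, which are direct from the definition of $[r]$: to realize $(U,\{\top\})\in[\tau]$ one picks $\top$ for each $a\in U$, while for $(U\wedge_{PA}V,U)\in[\lambda]$ and $(U\wedge_{PA}V,V)\in[\rho]$ one reads off the witness from the defining decomposition $c=a\wedge b$ of elements $c\in U\wedge_{PA}V$. The remaining ingredient is closure under $\llangle -,-\rrangle$, which by Remark~\ref{rem:basis} reduces to showing $\llangle[r_0],[s_0]\rrangle\subs[\llangle r_0,s_0\rrangle]$ for $r_0,s_0\in R$. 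This is a short elementwise check: if $(U,V)\in[r_0]$ and $(U,W)\in[s_0]$, then for each $a\in U$ one picks $b\in V$, $c\in W$ with $(a,b)\in r_0$, $(a,c)\in s_0$, so that $b\wedge c\in V\wedge_{PA}W$ witnesses $(a,b\wedge c)\in\llangle r_0,s_0\rrangle$. Cartesianness of $\eta$ is then immediate since $\eta(\top)=\{\top\}=\top_{PA}$ and $\eta(a\wedge b)=\{a\wedge b\}=\{a\}\wedge_{PA}\{b\}=\eta(a)\wedge_{PA}\eta(b)$, so the two required squares commute strictly, a fortiori up to isomorphism.

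Finally, for the Frobenius condition, I would use that $\UOrd\to\IOrd$ is a local equivalence and preserves finite $2$-products, whence the fibrewise meets in $\famf(D\brar)$ are computed pointwise by $\wedge_{PA}$, while existential quantification is pointwise union by Proposition~\ref{prop:dar-primal-ecomp}. The Frobenius equation $\vf\wedge\ex_u\psi\cong\ex_u(u^*\vf\wedge\psi)$ then reduces pointwise to the elementary identity $U\wedge_{PA}\bigcup_k V_k=\bigcup_k(U\wedge_{PA}V_k)$, which holds tautologically by construction of $\wedge_{PA}$.

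The only nontrivial step I anticipate is the basis-level inclusion $\llangle[r_0],[s_0]\rrangle\subs[\llangle r_0,s_0\rrangle]$, which is precisely the combinatorial fact that translates cartesianness of $\brar$ into cartesianness of $D\brar$; every other item reduces to direct unwinding of definitions together with the pointwise distributivity of $\wedge_{PA}$ over arbitrary unions.
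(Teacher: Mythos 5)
Your proposal is correct and follows the paper's proof exactly: the paper defines the same structure $\top_{PA}=\{\top\}$ and $U\wedge V=\setof{a\wedge b}{a\in U, b\in V}$, invokes Lemma~\ref{lem:cart-cond}, and declares the verification "straightforward." The checks you carry out --- the inclusions into $[\tau]$, $[\lambda]$, $[\rho]$, the basis-level inclusion $\llangle[r_0],[s_0]\rrangle\subs[\llangle r_0,s_0\rrangle]$, strict commutation for $\eta$, and pointwise distributivity of $\wedge_{PA}$ over unions for Frobenius --- are precisely the omitted details.
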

\begin{proof}
To show that $D\brar$ is cartesian we use Lemma~\ref{lem:cart-cond} and define
$\wedge:PA\x PA\to PA$ and $\top\in PA$ by  $U\wedge V =\setof{a\wedge b}{a\in
U,b\in V}$ and $\top = \{\top\}$. Then the verification of the conditions is
straightforward.
\end{proof}

\section{\Ccompness}\label{se:comcomp}

\begin{definition}\label{def:uquant}
\begin{enumerate}
\item 
We say that an indexed preorder \emph{has universal quantification} if it
satisfies the dual condition of
Definition~\ref{def:ex-iord}\ref{def:ex-iord:has-ex}. 
\item A \emph{Heyting preorder}\footnote{Contrary to the better-known
\emph{Heyting algebras}, Heyting preorders need not have finite joins---those
will turn out to exist in the cases we're interested in, but we don't have to
assume them.} is a meet-semilattice $(H,\leq)$ which is cartesian closed, i.e.\
for all $a\in H$ the monotone map $(-\wedge a)$ has a right adjoint $(a\imp -)$
called \emph{Heyting implication}.
\item
An \emph{indexed meet-semilattice} $\fifh$ is said to \emph{have implication}
if its fibers are Heyting preorders, and this structure is preserved up to
isomorphism by reindexing.
\item A \emph{tripos} is an indexed meet-semilattice $\trip$ which has universal
quantification, implication, and a generic predicate.
\end{enumerate}
\begin{remarks}
\begin{enumerate}
\item Since they're assumed to have generic predicates, all triposes are
representable by uniform preorders.
\item As explained in~\cite[Theorem~1.4]{hjp80}, using Prawitz-style second
order encodings~\cite[page~67]{prawitz1965natural} one can show that triposes
have existential quantification and fiberwise finite joins which are stable
under reindexing. In other words, triposes are models of full first order logic.
\end{enumerate}
\end{remarks}
\end{definition}
\begin{definition}\label{def:rcomp} A cartesian uniform preorder $(A,R)$ is
called \emph{\ccomp} if there exists a relation $@\in R$ (called
`universal relation'), such that for every relation $r\in R$ there exists a
\emph{function} (i.e.\ a single-valued and entire relation) $\tilde r\in R$ with
\begin{equation}
r\circ\wedge\;\subs\; @\circ \wedge\circ (\tilde r\x \id_A),
\end{equation}
in other words
\begin{equation}\label{eq:impl-rel-compl}
\fa a\,b\,c\in A\;.\;(a\wedge b,c)\in r\;\imp\; (\tilde r(a)\wedge b,c)\in @.
\end{equation}
\end{definition}
\begin{remarks}\label{rem:comcomp}\begin{enumerate}
\item\label{rem:comcomp:snm}\Ccompness can be viewed as a generalization of the {functional completeness}
property of recursive functions expressed by the \emph{s-m-n theorem}, which in
its most basic form (see e.g.~\cite[Theorem~4.4.1]{cutland1980computability})
says that for every partial recursive function $f(x,y)$ in two arguments there
exists a \emph{total recursive} function $\tilde f(x)$ in one argument such that
the partial functions $f(x,y)$ and $\phi_{\tilde f(x)}(y)$ are equal, where
$(\phi_n)_\nin$ is a effective enumeration of partial recursive functions. 

Note that besides using relations instead of partial functions, the statement
above is somewhat weaker than that of the s-m-n theorem since \emph{equality} of
partial functions is replaced by \emph{inclusion} of relations. See also
Remark~\ref{rem:rpca}\ref{rem:rpca:faber}.
\end{enumerate}
\end{remarks}
\begin{theorem}\label{thm:rel-compl}
The following are equivalent for a cartesian uniform preorder $\brar$.
\begin{enumerate}
\item $\brar$ is {\ccomp}.
\item $\famf(D\brar)$ is a tripos.
\end{enumerate}
\end{theorem}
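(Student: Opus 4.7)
The plan is to make the remaining tripos structure on $\famf(D\brar)$ concrete in terms of the universal relation $@$. By Propositions~\ref{prop:dar-primal-ecomp} and~\ref{prop:dar-ifrm}, $\famf(D\brar)$ is already a cartesian indexed frame with generic predicate, so in either direction what remains is to handle Heyting implication (stable under reindexing) and universal quantification (with Beck--Chevalley).

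For $(i)\Rightarrow(ii)$, I would define fiberwise
\begin{equation*}
(\phi\imp\psi)(i)=\setof{e\in A}{\fa b\in\phi(i)\sex c\in\psi(i)\qdot(e\wedge b,c)\in @}
\end{equation*}
and, for $u:J\to I$,
\begin{equation*}
(\fa_u\phi)(i)=\setof{e\in A}{\fa j\in u^{-1}(i)\sex c\in\phi(j)\qdot(e,c)\in @}.
\end{equation*}
Each adjunction has an easy half where the required realizer is assembled from the given one using composition with $@$ and the cartesian combinators $\llangle\cdot,\cdot\rrangle$, $\lambda$, $\rho$, $\tau$ of Lemma~\ref{lem:cart-cond}, and a hard half where relational completeness enters. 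For implication, relational completeness applied to a realizer $r\in R$ of $\theta\wedge\phi\leq\psi$ produces a function $\tilde r\in R$ which, viewed as a relation, realizes $\theta\leq\phi\imp\psi$. For universal quantification, one first replaces a realizer $r\in R$ of $u^*\Psi\leq\phi$ by $r\circ\lambda\in R$, applies relational completeness, and specializes the auxiliary input to $\top$ using $\tau$; the resulting function $a\mapsto\widetilde{r\circ\lambda}(a)\wedge\top$, assembled as $\llangle\widetilde{r\circ\lambda},\tau\rrangle\in R$, then realizes $\Psi\leq\fa_u\phi$ uniformly across all fibers of $u$. Reindexing stability and Beck--Chevalley are immediate from the pointwise formulas.

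For $(ii)\Rightarrow(i)$, write $\eta:A\to PA$ for the singleton map. I would take $@\in R$ to be any realizer of the Heyting counit $(\pi_1^*\eta\imp\pi_2^*\eta)\wedge\pi_1^*\eta\leq\pi_2^*\eta$ in $\famf(D\brar)(A\x A)$, so that $e\in(\{a\}\imp\{b\})$ entails $(e\wedge a,b)\in @$. Given $r\in R$, the inclusion $i:X=\setof{(a,b,c)\in A^3}{(a\wedge b,c)\in r}\hookrightarrow A^3$ satisfies $i^*(\pi_1^*\eta\wedge\pi_2^*\eta)\leq i^*\pi_3^*\eta$, realized by $r$ itself; currying via the Heyting $\imp$-adjunction on $X$ and then applying $\fa$ along $\pi_1\circ i$ produces an inequality $\eta\leq\fa_{\pi_1\circ i}(i^*(\pi_2^*\eta\imp\pi_3^*\eta))$ on $A$. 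Any $R$-realizer $t$ of this inequality has a functional subrelation by the axiom of choice in $\Set$, and down-closure of $R$ places it in $R$; I would take this as $\tilde r$.

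The main obstacle lies in this second direction: the abstract tripos $\fa$ only asserts that $\tilde r(a)$ lies in the universal-quantification predicate, whereas relational completeness demands the literal implication $(a\wedge b,c)\in r\Rightarrow(\tilde r(a)\wedge b,c)\in @$. Bridging this gap amounts to showing that, on singleton-valued predicates, the tripos's $\fa$ coincides with set-theoretic intersection up to an $R$-realizer, which can then be absorbed once and for all into the candidate $@$ by composition; this absorption is legitimate because relational completeness only asks for the existence of some such universal relation in $R$ and $R$ is closed under composition.
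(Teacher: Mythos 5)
Your direction (i) $\Rightarrow$ (ii) is essentially the paper's argument: the paper defines the single synthetic connective $\fa_u(\vf\imp\psi)$ by the same pointwise formula and recovers implication and universal quantification as special cases, and your verification of the two halves of each adjunction (counit realized by $@$, transpose realized by $\tilde r$ resp.\ $\llangle\widetilde{r\circ\lambda},\tau\rrangle$) matches the paper's. That half is fine.

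The gap is in (ii) $\Rightarrow$ (i), exactly at the point you flag as the main obstacle. For each $r\in R$ you obtain $\tilde r(a)\in\fa_{p}(\Theta_r)(a)$, where $p:X_r\to A$ and $\Theta_r=i^*(\pi_2^*\eta\imp\pi_3^*\eta)$ depend on $r$; to conclude $(\tilde r(a)\wedge b,c)\in @$ you must pass from membership in the abstract $\fa_{p}(\Theta_r)(a)$ to membership in each $\{b\}\imp\{c\}$ over the fiber. That passage is the counit $p^*\fa_{p}\Theta_r\leq\Theta_r$, which is realized by some $q_r\in R$ --- but $q_r$ depends on $r$, since $p$ and $\Theta_r$ do. Composing $q_r$ into your candidate $@$ therefore produces a different relation for each $r$; the proposed ``absorption once and for all'' is precisely what a general tripos does not provide, since there is no uniformity of realizers across the family of instances $(X_r,\Theta_r)_{r\in R}$. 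The paper's proof avoids this by working with one generic instance: it forms the membership relation $E\subseteq A\times A\times P(A\times A)$ with projection $u:E\to P(A\times A)$, sets $\theta=\fa_u(\vf\imp\psi)$ once, chooses $@$ so that $[@]$ realizes the single counit $u^*\theta\wedge\vf\leq\psi$, and then for each $r$ pulls everything back along the classifying map $w:A\to P(A\times A)$, $w(a)=\setof{(b,c)}{(a\wedge b,c)\in r}$, using Beck--Chevalley to reduce $\eta\leq w^*\theta$ to the inequality realized by $r$. The literal membership $\tilde r(a)\in\theta(w(a))$ then feeds directly into the one fixed realizer $[@]$, with no per-$r$ correction. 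If you reorganize your second direction around this classifying family, the remainder of your outline goes through.
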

\begin{proof}
Assume first that $\famf(D\brar)$ is a tripos, and assume w.l.o.g.\ that
conjunction is given `on the nose' by the pointwise construction $U\wedge V =
\setof{u\wedge v}{u\in U, v\in V}$ from the proof of
Proposition~\ref{prop:dar-ifrm}. Let $E\;\incl\;A\x A\x P(A\x A)$ be the
membership relation, and define $u:E\to P(A\x A)$ and $\vf,\psi:E\to PA$ by 
\begin{equation}
u(b,c,s)=s\qquad 
\varphi(b,c,s) = \{b\} \qquad 
\psi(b,c,s)=\{c\}.
\end{equation}
We set $\theta=\fa_u(\varphi\imp\psi) : P(A\x A)\to PA$ and let $@\in R$ such
that $[@]$ is a realizer of $u^*\theta\wedge\varphi\leq\psi$. Now for every
$r\in R$ we construct a pullback
\begin{equation}
\begin{tikzcd}
|[label={[overlay, label distance=-2.5mm]-45:\lrcorner}]| 
M
        \ar[r,"x"]
        \ar[d,"v"']
& E
        \ar[d,"u"]
\\ A
        \ar[r,"w"']
&   P(A\x A)
\end{tikzcd}
\qquad\qquad
\begin{aligned}
M &= \setof{(a,b,c)}{(a\wedge b,c)\in r}
\\ v(a,b,c) &= a
\\ x(a,b,c) &= (r,b,c)
\\ w(a)     &= \setof{(b,c)}{(a\wedge b,c)\in r}
\end{aligned}
\end{equation}
and a simple argument using the Beck--Chevalley condition gives $\eta\leq
w^*\theta$, where $\eta:A\to PA$ is the singleton map. Any $s\in R$ such that
$[s]$ realizes this inequality is total, and using choice we pick $\tilde r$ to
be a subfunction, so that $\fa a\in A\;.\; \tilde r(a)\in\theta(w(a))$.
Implication~\eqref{eq:impl-rel-compl} follows since $[@]$ is a realizer of the
inequality $v^*w^*\theta\wedge x^*\vf\leq x^*\psi$.

\medskip

Conversely, assume that $\brar$ is \ccomp. Instead of
constructing implication and universal quantification separately, we show how to
define the `synthetic' connective $\forall_u(\varphi\imp\psi)$ for $u:J\to I$
and $\varphi,\psi\in \famf(D\brar)(I)$. Implication and universal quantification
can then be recovered by either replacing $u$ by the identity, or $\varphi$ by
the true predicate. For $\varphi,\psi:J\to PA$ define
$\forall_u(\varphi\imp\psi):I\to PA$ by 
\begin{equation*}
\forall_u(\varphi\imp\psi)(i)=\bigcap_{uj=i}\{a\in A\msep\forall
b\in\varphi(j)\,\exists c\in\psi(j)\qdot @(a\wedge b,c)\}.
\end{equation*}
It is then easy to see that the inequality
$u^*\forall_u(\varphi\imp\psi)\wedge\varphi\leq\psi$ is realized by $@$; and if
$\zeta:I\to PA$ such that the inequality $u^*\xi\wedge\varphi\leq \psi$ is
realized by $r\in R$, then $\tilde{r}$ realizes
$\xi\leq\forall_u(\varphi\imp\psi)$.
\end{proof}

\begin{remarks}\label{rem:relcompl}
\begin{enumerate}
\item\label{rem:relcompl:equiv-list}The list of equivalent statements in Theorem~\ref{thm:rel-compl} can be extended
by the following, where $\Set[\famf(D\brar)]$ is the \emph{category of partial
equivalence relations and compatible functional relations}\footnote{The
construction of $\Set[\famf(D\brar)]$ from $\famf(D\brar)$ is called \emph{exact
completion of the `existential elementary doctrine'} $\famf(D\brar)$ e.g.\
in~\cite{maietti2012unifying}. If $\famf(D\brar)$ is a tripos, the construction
is the well known \emph{\tttc}~\cite{hjp80}.} in the fibered frame
$\famf(D\brar)$, and $\PAsm\brar\,=\,\gcons(\famf\brar)$ is the total category
of the indexed preorder $\famf\brar$ (which is the classical category of
\emph{partitioned assemblies} if $\brar$ comes from a PCA):
\begin{enumerate}
\item[(iii)] $\Set[\famf(D\brar)]$ is a topos.
\item[(iv)] $\Set[\famf(D\brar)]$ is locally cartesian closed.
\item[(v)] $\PAsm\brar$ is weakly locally cartesian closed.
\end{enumerate}
It is well known that (iii) follows from (ii): this is the reason for the term
`\tttc'. Clearly (iii) implies (iv). Next, (iv) implies (ii) since since every
fibered frame $\fifh$ can be presented as 
\begin{equation}
\fifh\,\simeq\,\bigl(\Set\op\xrightarrow{\Delta\op}
\Set[\fifh]\op\xrightarrow{\mathsf{sub}}\Ord\bigr)
\end{equation}
where $\Delta$ is the constant-objects-functor and $\mathsf{sub}$ is the indexed
preorder of subobjects. If $\Set[\fifh]$ is locally cartesian closed then
$\fifh$ is an indexed Heyting algebra with $\fa$ and $\ex$ since $\mathsf{sub}$
is and this property is stable under precomposition with the finite-limit
preserving functor $\Delta$. In the case $\fifh = \famf(D\brar)$ we furthermore
have a generic predicate, so that $\famf(D\brar)$ is a tripos. 

Finally, the equivalence between (iv) and (v) follows from Carboni--Rosolini's
characterization of locally cartesian closed exact completions (`the exact
completion of a finite-limit category $\mcc$ is locally cartesian closed iff
$\mcc$ is weakly locally cartesian closed',~\cite{carboni2000locally}), since
$\Set[\famf\brar]$ is an ex/lex completion of $\PAsm\brar$ by means of the
functor
\begin{equation}\label{eq:ttt-exlex}
\PAsm\brar\,\to\,\Set[\famf(D\brar)]
\end{equation}
which sends $\vf:I\to A$ to the sub-diagonal p.e.r.\ on $I$ with support
$I\xrightarrow{\vf}A\incl PA$: to verify this fact observe that the functor is
fully faithful, and the objects in its image are projective and cover all other
objects.

Analogous reformulations of \ccompness for \emph{many-sorted}
uniform preorders are given in~\cite[Theorem~4.10.3]{frey2013fibrational}.
\item\label{rem:relcompl:tripos-char}
Theorem~\ref{thm:rel-compl} gives rise to an correspondence between 
\begin{itemize}
\item relational complete uniform preorders $\brar$, and 
\item triposes $\trip$ with enough $\exists$-primes, such that $\prim(\trip)$
has finite meets.
\end{itemize}
If $\brar$ is \ccomp then $\famf(D\brar)$ is such a tripos, and
conversely if $\trip$ is such a tripos, then any $\exists$-prime predicate which
covers the generic predicate of $\trip$ is generic in $\prim(\trip)$, whence the
latter is representable by an uniform preorder $\brar$, which is cartesian by
assumption, and \ccomp by the theorem.

In type theoretic, `univalent' language~\cite{hottbook} one would state this
correspondence as an \emph{equivalence} the \emph{type} of \ccomp
preorders and the \emph{type} of the specified triposes. In classical
foundations this translates into an equivalence of two $1$-groupoids, which can
both be realized as sub-groupoids of the \emph{core}\footnote{i.e.\ the
subgroupoid of all isos} of the hom-wise poset reflection $\IOrd$.
\end{enumerate}
\end{remarks}

\begin{examples}\label{ex:comcomp}
\begin{enumerate}
\item\label{ex:comcomp-slat}
For every meet-semilattice $(A,\leq)$, the uniform preorder $(A,R_\leq)$
corresponding to its canonical indexing $\famf(A,\leq)$  is 
\ccomp. This is because $\famf(D(A,R_\leq))$ is equivalent to the canonical
indexing of the frame of down-sets in $(A,\leq)$, and the latter is known to be
a tripos.
\item\label{ex:comcomp:tripos}
If $\brar$ is an uniform preorder such that $\famf\brar$ is a tripos, then
$\brar$ is \ccomp, and thus $\famf(D\brar)$ is a tripos as well.
This is shown using variant of the construction in the proof of
Theorem~\ref{thm:rel-compl}: we take $E\incl A\x A\x P(A\x A)$ to be the
membership relation as above, let $\vf,\psi:E\to A$ and $u:E\to P(A\x A)$ be
three projections, set $\theta = \fa_u(\vf\imp\psi)$, and take $@$ to be a
realizer of $u^*\theta\wedge\vf\leq\psi$. Given $r\in R$ we again we construct
the pullback
\begin{equation}
\begin{tikzcd}
|[label={[overlay, label distance=-2.5mm]-45:\lrcorner}]| 
M
        \ar[r,"x"']
        \ar[d,"v"']
& E
        \ar[d,"u"]
\\ A
        \ar[r,"w"']
&   P(A\x A)
\end{tikzcd}
\qquad
\begin{aligned}
M &= \setof{(a,b,c)}{(a\wedge b,c)\in r}
\\ v(a,b,c) &= a
\\ x(a,b,c) &= (r,b,c)
\\ w(a)     &= \setof{(b,c)}{(a\wedge b,c)\in r}\,,\,
\end{aligned}
\end{equation}
and chasing around it we get $\id_A\leq w^*\theta$, i.e.\ $\theta\circ w \in R$,
and we take this function to be $\tilde r$. The
implication~\eqref{eq:impl-rel-compl} follows since $@$ realizes
$v^*w^*\theta\wedge x^*\vf\leq x^*\psi$.

Since $\famf\brar$ is a tripos by assumption, $\brar$ is a $D$-algebra, i.e.\
$\eta:\brar\to D\brar$ has a left adjoint $\alpha: D\brar\to\brar$ (see
Remark~\ref{rem:d-lax-idempotent}), and it is easy to verify by hand that this
left adjoint is cartesian. In other words, $\famf\brar$ is a \emph{geometric
subtripos} of $\famf(D\brar)$, and this subtripos inclusion gives rise to a
geometric \emph{subtopos} inclusion $\Set[\famf\brar]\incl\Set[\famf(D\brar)]$
via the \tttc. The intermediate quasitopos of separated objects is the
\emph{q-topos} $\boldsymbol{Q}(\famf\brar)$ associated to the tripos via the
construction described in~\cite[Definition~5.1]{frey2015triposes}. 
We recall that the notion of q-topos is slightly weaker than that of quasitopos
(not requiring coproducts or local cartesian closure), and was introduced
in~\cite{frey2015triposes} since the construction of $\boldsymbol{Q}(\trip)$
does {not} seem to produce a quasitopos over arbitrary base categories. However,
the argument above shows that the construction \emph{does} produce quasitoposes
for $\Set$-based triposes.
\end{enumerate}
\end{examples}
Another large class of examples of \ccomp uniform preorders is
given in the next section.

\section{Ordered partially combinatory algebras}\label{se:opca}

We recall the relevant definitions
from~\cite[Section~2.6.5]{vanoosten2008realizability}.
\begin{definition}
An \emph{ordered applicative structrure} (OPAS) is a triple $(A,\leq,\cdot)$
where $(A,\leq)$ is a poset and $(-\cdot-):A\x A\pto A$ is a partial binary
operation.
\end{definition}
\begin{remarks}
\begin{enumerate}
\item Application associates to the left, i.e.\ $a\ap b\ap c$ is a shorthand for
$(a\ap b)\ap c$.
\item A \emph{polynomial} over an OPAS $(A,\leq,\cdot)$ is a term built up from
variables, constants from $A$, and application $(-\cdot-)$. We write
$p[x_1,\dots,x_n]$ for a polynomial which may (but is not required to) contain
the variables $x_1,\dots,x_n$, and if $a_1,\dots,a_n\in A$ we write
$p[a_1,\dots,a_n]$ for the possibly undefined result of substituting and
evaluating.
\item When reasoning with partial terms, $t\down$ means that $t$ is defined, and
the statement of an equality $s=t$ or inequality $s\leq t$ contains the implicit
assertion that both sides are defined.
\end{enumerate}
\end{remarks}
\begin{proposition}\label{prop:opas-wcombicomp}
The following are equivalent for an OPAS $(A,\leq,\cdot)$.
\begin{enumerate}
\item For all polynomials $p[x_1,\dots,x_n,y]$ over $A$ there exists an element 
$e\in A$ such that for all $a_1,\dots,a_n,b \in A$:
\begin{itemize}
\item $e\ap a_1\ap\dots\ap a_n\down$
\item $p[a_1,\dots,a_n,b]\down$ implies $e\ap a_1\ap\dots\ap a_n\ap b\down$
and $e\ap a_1\ap\dots\ap a_n\ap b\leq p[a_1,\dots,a_n,b]$
\end{itemize}
\item\label{prop:opas-wcombicomp-ks} there exist elements $\comk,\coms\in A$
such that for all $a,b,c\in A$:
\begin{itemize}
\item $\comk\ap a\ap b\leq a$
\item $\coms\ap a\ap b\down$
\item $a\ap c\ap (b\ap c)\down$ implies $\coms\ap a\ap b\ap c\down$ and 
$\coms\ap a\ap b\ap c\leq a\ap c\ap (b\ap c)$ 
\end{itemize}
\end{enumerate}
\end{proposition}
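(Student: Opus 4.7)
The plan is to prove the two implications separately. The direction (1) $\Rightarrow$ (2) is immediate by specialization. For $\comk$, I would apply (1) with $n = 1$ and polynomial $p[x,y] = x$; since $p[a,b] = a$ is always defined, the resulting element $e$ is defined after one argument and satisfies $e \ap a \ap b \leq a$. For $\coms$, I would apply (1) with $n = 2$ and $p[x,y,z] = x \ap z \ap (y \ap z)$; this yields an element that is defined after two arguments and satisfies $e \ap a \ap b \ap c \leq a \ap c \ap (b \ap c)$ whenever the right-hand side is defined, which are exactly the three axioms for $\coms$.

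The direction (2) $\Rightarrow$ (1) uses the classical Feferman-style $\lambda^*$-abstraction construction. I would define $\lambda^* x. p$ by induction on the polynomial $p$:
\begin{itemize}
\item $\lambda^* x. x = \coms \ap \comk \ap \comk$ (abbreviate this element by $\comi$);
\item $\lambda^* x. y = \comk \ap y$ for a variable $y \neq x$;
\item $\lambda^* x. k = \comk \ap k$ for a constant $k \in A$;
\item $\lambda^* x. (p \ap q) = \coms \ap (\lambda^* x. p) \ap (\lambda^* x. q)$.
\end{itemize}
The core lemma to prove by structural induction on $p$ is: for any tuple $\vec a$ of values for the other free variables of $p$, the term $(\lambda^* x. p)[\vec a]$ is defined, and if $p[\vec a, b]\down$ then $(\lambda^* x. p)[\vec a] \ap b\down$ and $\leq p[\vec a, b]$. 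The base case $\comi \ap b \leq b$ follows from $\coms \ap \comk \ap \comk \ap b \leq \comk \ap b \ap (\comk \ap b) \leq b$, using that $\comk \ap b \ap c$ is always defined with value $\leq b$. For a polynomial $p[x_1, \ldots, x_n, y]$ as in statement (1), I would then obtain the required $e$ by iterating: $e = \lambda^* x_1. \cdots \lambda^* x_n. \lambda^* y. p$.

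The main obstacle is the application case $\lambda^* x. (p \ap q)$ of the induction. To invoke the $\coms$-axiom and conclude $\coms \ap (\lambda^* x. p)[\vec a] \ap (\lambda^* x. q)[\vec a] \ap b\down$, I need $(\lambda^* x. p)[\vec a] \ap b \ap ((\lambda^* x. q)[\vec a] \ap b)\down$. The inductive hypothesis supplies the subterms $(\lambda^* x. p)[\vec a] \ap b$ and $(\lambda^* x. q)[\vec a] \ap b$ together with inequalities $\leq p[\vec a, b]$ and $\leq q[\vec a, b]$, while $p[\vec a, b] \ap q[\vec a, b]\down$ by assumption. Passing from defined-ness of the larger application to that of the smaller one requires the standard monotonicity property for OPCAs, namely that $u \leq u'$, $v \leq v'$, and $u' \ap v'\down$ imply $u \ap v\down$ with $u \ap v \leq u' \ap v'$, which is usually tacitly part of the OPAS structure in the literature (see, e.g., van Oosten) but is not made explicit in the definition above. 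Granted this monotonicity, the inductive step and the iterated construction both go through routinely.
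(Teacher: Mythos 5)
Your argument is correct and is essentially the proof the paper relies on: the paper gives no argument of its own but cites van Oosten's Theorem~1.8.4, whose proof is exactly this specialization for the $(1)\Rightarrow(2)$ direction and the $\lambda^*$-abstraction induction for $(2)\Rightarrow(1)$. The one point where you genuinely add something is the monotonicity caveat, and you are right to insist on it: both the application case of the induction and the iterated outer abstractions (where $e\ap a_1$ is only $\leq$, not equal to, $(\lambda^*x_2.\cdots)[a_1]$ before the next argument is applied) require that $u\leq u'$, $v\leq v'$ and $u'\ap v'\down$ imply $u\ap v\down$ with $u\ap v\leq u'\ap v'$. This axiom is not ``tacit'' in the source, though --- it is an explicit clause of the definition of ordered partial applicative structure in van Oosten (and in Hofstra--van Oosten), and the paper's Definition of OPAS has simply dropped it in transcription; without it the standard proof of $(2)\Rightarrow(1)$ does not go through as stated. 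So your proof is the intended one, and your flagged assumption should be read as a correction to the paper's definition rather than as a gap in your argument.
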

\begin{proof}
\cite[Theorem~1.8.4]{vanoosten2008realizability}
\end{proof}
\begin{definition}\label{def:opca}
\begin{enumerate}
\item\label{def:opca:opca}
An \emph{ordered combinatory algebra} (OPCA) is an OPAS satisfying the
equivalent conditions of Proposition~\ref{prop:opas-wcombicomp}.
\item\label{def:opca:filter} A \emph{filter} on an OPCA is a subset $\Phi\subs
A$ which is upward closed, closed under application, and contains choices of
elements $\comk,\coms$ as in
Proposition~\ref{prop:opas-wcombicomp}\ref{prop:opas-wcombicomp-ks}. A
\emph{filtered OPCA} is a quadruple $(A,\leq,\cdot,\Phi)$ where $(A,\leq,\cdot)$
is an OPCA and $\Phi$ is a filter on $A$.
\end{enumerate}
\end{definition}
Given a filtered OPCA $(A,\leq,\cdot,\Phi)$ we define a strict indexed preorder
structure on the representable presheaf $\Set(-,A)$ by setting 
\begin{equation}
(\vf:I\to A)\leq(\psi:I\to A)\qqdefequi\exists e\in \Phi\sfa i\in 
I\;.\; e\ap\phi(i)
\leq\psi(i)\,,\,
\end{equation}
It follows from standard arguments in combinatory logic that this indexed
preorder is well defined (i.e.\ reflexive and transitive), and actually an
\emph{indexed meet-semilattice}, and as Hofstra explains
in~\cite[p.~252]{hofstra2006all}, its $\ex$-completion is an ordered variant of
a relative realizability construction and in particular a tripos. Thus, the
corresponding uniform preorder $(A,R_\Phi)$
is \ccomp by Theorem~\ref{thm:rel-compl}.

The mapping from filtered OPCAs to uniform preorders factors through BCOs: the
BCO corresponding to $(A,\leq,\cdot,\Phi)$ is given by $(A,\leq,\mcf_\Phi)$,
where 
\begin{equation}
\mcf_\Phi\,=\,\setof{(e\ap-):A\pto A}{e\in\Phi}.
\end{equation}
Thus, a basis for the uniform preorder structure $R_\Phi$ is given by
$\setof{r_e\subs A\x A}{e\in\Phi}$, with $r_e=\setof{(a,b)\in A\x A}{e\ap a\leq
b}$.

\medskip

In the following we describe the \emph{discretely ordered} special case of this
correspondence, which identifies filtered (better known as `relative') PCAs with
\ccomp \emph{discrete combinatory objects}. Since discrete
combinatory objects admit an easy characterization among indexed preorders, this
enables us to give a characterization of (relative) realizability triposes.

\section{Discreteness}\label{se:discrete}

\begin{definition}\label{def:discrete}\begin{enumerate}
\item\label{def:discrete:dco} A \emph{discrete combinatory object (DCO)} is a
uniform preorder where all relations $r\in R$ are \emph{single-valued}, i.e.\
partial functions. We write $\DCO$ for the full locally ordered subcategory of
$\UOrd$ on DCOs.
\item\label{def:discrete:predicate}A predicate ${\delta\in\fifa(I)}$ of an indexed preorder $\fifa$ is called
\emph{discrete} if for every surjection $e:K\epi J$, function $f:K\to I$, and
predicate $\varphi\in\fifa(J)$ such that $e^*\vf\leq f^*\delta$, there exists a
(necessarily unique) $g: J\to I$ with $g\circ e = f$ (and therefore $\vf\leq
g^*\delta$ since reindexing along split epis is order-reflecting).
\end{enumerate}
\end{definition}
\begin{remarks}\label{rem:discrete}
\begin{enumerate}
\item\label{rem:discrete:jpaa-ref}
DCOs were introduced in~\cite[Definition~2.2]{frey2019characterizing} in terms
of bases, i.e.\ as sets $A$ equipped with a set $\mcf$ of partial endofunctions
containing the identify and weakly closed under composition in the sense that
for all $f,g\in\mcf$ there exists an $h\in\mcf$ such that $g\circ f\subs h$.
Down-closure in $P(A\x A)$ of such a structure yields a DCO $(A,\down\mcf)$ in
the above sense inducing the same indexed preorder and the two definitions give
rise to equivalent locally ordered categories, the principal difference being
that for the above, `saturated' definition, the $2$-functor $\DCO\to\IOrd$ is
injective on objects.
\item\label{rem:discrete:total-cat} In fibrational language, discreteness of
$\delta\in\fifa(A)$ says that $(A,\delta)$ has the right lifting property in the
total category $\int\fifa$ w.r.t.\ all  cartesian maps over surjections.
\item\label{rem:discrete:reind-inj-surj} It is easy to see that reindexings of
discrete predicates along injections are discrete again. Reindexings along
surjections, on the other hand, are discrete only in the trivial case that the
surjection is a bijection.
\item\label{rem:discrete:bco}
DCOs embed into BCOs: modulo the issue of bases vs.\ saturated presentations
discussed in~\ref{rem:discrete:jpaa-ref}, they correspond precisely to BCOs
whose order structure is trivial. Thus, we can extend the
sequence~\eqref{eq:ord-bco-uord-iord} of embeddings to the following diagram.
\begin{equation}
\begin{tikzcd}[sep = small]
|[label={[overlay, label distance=-2.5mm]-45:\lrcorner}]| \Set
    \ar[r]
    \ar[d]
& \DCO
    \ar[d]
\\ \Ord
    \ar[r]
& \BCO
    \ar[r]
& \UOrd
    \ar[r]
& {[\Set\op,\Ord]}
    \ar[r]
& \IOrd
\end{tikzcd}
\end{equation}
The intersection of $\Ord$ and $\DCO$ is trivial, in the sense that it only
contains discretely ordered representable presheaves: this is because indexed
preorders representable by ordinary preorders are stacks for the canonical
topology, and if $\famf(A,R)$ is such a stack for a DCO $(A,R)$, then $R$
contains only subfunctions of $\id_A$ (otherwise, the stack condition would give
$(a,a)\leq (a,f(a))$ over $2$).
\end{enumerate}
\end{remarks}
The following clarifies the relationship between the two notions of discreteness
introduced in Definition~\ref{def:discrete}.
\begin{proposition}\label{prop:dco-iff-discrete}
A uniform preorder $\brar$ is a DCO if and only if the generic predicate
$\id_A\in\famf\brar(A)$ is discrete.
\end{proposition}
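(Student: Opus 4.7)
The plan is to unwind both directions in terms of the concrete description of the fiber orderings in $\famf\brar$ given in Definition~\ref{def:fam}. Recall that for $\vf,\psi:I\to A$, $\vf\leq\psi$ in $\famf\brar(I)$ iff $\image\angs{\vf,\psi}\in R$, and note that with $\iota=\id_A$ as generic predicate, any $f:K\to A$ satisfies $f^*\iota=f$. So a hypothesis $e^*\vf\leq f^*\iota$ in the discreteness clause translates directly to the membership $\image\angs{\vf\circ e,f}=\setof{(\vf(e(k)),f(k))}{k\in K}\in R$.

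For the forward direction, assume $\brar$ is a DCO and let $e:K\epi J$ be surjective, $f:K\to A$, and $\vf:J\to A$ with $e^*\vf\leq f^*\iota$. Then by the above there is some $r\in R$ containing $\setof{(\vf(e(k)),f(k))}{k\in K}$, and $r$ is single-valued by hypothesis. For any $k,k'\in K$ with $e(k)=e(k')$, both $(\vf(e(k)),f(k))$ and $(\vf(e(k)),f(k'))$ lie in $r$, forcing $f(k)=f(k')$. Hence $f$ is constant on the fibers of the surjection $e$, and so factors as $f=g\circ e$ for a unique $g:J\to A$, which is the required witness of discreteness.

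For the converse, assume $\iota=\id_A$ is discrete and let $r\in R$ with $(a,b),(a,c)\in r$. Set $K=\{0,1\}$, $J=\{*\}$, let $e:K\to J$ be the unique (surjective) map, and define $f:K\to A$ by $f(0)=b$, $f(1)=c$ and $\vf:J\to A$ by $\vf(*)=a$. Then $\image\angs{\vf\circ e,f}=\{(a,b),(a,c)\}\subs r$, so $e^*\vf\leq f^*\iota$, and discreteness produces $g:J\to A$ with $g\circ e=f$. Since $J$ is a singleton, $g$ is constant, so $b=f(0)=f(1)=c$. Thus every $r\in R$ is single-valued, \ie $\brar$ is a DCO.

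There is no real obstacle: the argument is essentially a direct translation between the combinatorial definition of single-valuedness and the lifting formulation of discreteness, with the two-point domain $K=\{0,1\}$ providing exactly the test case needed to extract single-valuedness from the general lifting property.
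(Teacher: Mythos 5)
Your proof is correct and follows essentially the same route as the paper's: both directions amount to translating single-valuedness of the relations in $R$ into the factorization property of $f$ through $e$. The only difference is presentational --- the paper argues diagrammatically via image factorizations (applying discreteness once to the whole relation $r$ in the converse), while you argue element-wise and test discreteness on a two-point fiber, which is an equally valid instantiation of the same idea.
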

\begin{proof}
Assume first that $\brar$ is a DCO and consider a span
$J\stackrel{e}{\epil}K\stackrel{f}{\to}A$ with $e$ surjective, and a predicate
$\vf:J\to A$ with $e^*\vf\leq f^*\id_A$. Form the image factorization (1) of
$\langle \vf\circ e,f \rangle$. 
\begin{equation}
\text{(1) }\begin{tikzcd}
    K       \ar[dr,"{\langle \vf\circ e,f \rangle}"']
            \ar[r,"h", two heads]
&   r       \ar[d,"{\langle p,q \rangle}", hook]
\\& A\x A
\end{tikzcd}
\qquad\qquad\text{(2) }\begin{tikzcd}
K
        \ar[r,"h" description, two heads]
        \ar[d,"e"', two heads]
&   r
        \ar[d,"p", tail]
\\  J
        \ar[ru,"k" description, dashed]
        \ar[r,"\vf" description]
&   A
\end{tikzcd}
\end{equation}
Then $r\in R$ and therefore $p$ is injective since $(A,R)$ is a DCO. Since $e$
is surjective we obtain a lifting $k$ in the square (2) and the desired map is
$q\circ k$.

Conversely assume that $\id_A$ is discrete, let $r\in R$, write $\langle p,q
\rangle:r\incl A\x A$ for the inclusion, and let
$r\stackrel{e}{\epi}U\stackrel{m}{\hookrightarrow}A$ be an image factorization
of $p$. We have $p^*(\id_A)=e^*(m^*(\id_A))\leq q^*(\id_A)$, and discreteness of
$\id_A$ implies that there exists $g:U\to A$ with $g\circ e = q$. We obtain a
factorization $\langle p,q \rangle=\langle m,g \rangle \circ e$, and since
$\langle p,q \rangle$ is injective we conclude that $e$ is bijective and thus
$r$ is single-valued.
\end{proof}
\begin{corollary}\label{cor:dco-discgen}
An indexed preorder $\fifa$ is representable by a DCO if and only if it has a
discrete generic predicate. 
\end{corollary}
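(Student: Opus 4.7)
The plan is to derive the corollary by directly combining Lemma~\ref{lem:genpred-image} (which characterizes indexed preorders representable by uniform preorders as those admitting a generic predicate) with Proposition~\ref{prop:dco-iff-discrete} (which characterizes DCOs among uniform preorders as those whose canonical generic predicate $\id_A$ is discrete). The only non-bookkeeping step is to observe that discreteness of a predicate, as formulated in Definition~\ref{def:discrete}\ref{def:discrete:predicate}, is invariant under equivalence of indexed preorders, so the property transfers across the equivalences produced by Lemma~\ref{lem:genpred-image}.

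For the forward direction, I would assume $\fifa\simeq\famf\brar$ for a DCO $\brar$. Then by Proposition~\ref{prop:dco-iff-discrete}, the generic predicate $\id_A\in\famf\brar(A)$ is discrete. Transporting this predicate along the equivalence $\fifa\simeq\famf\brar$ produces a generic predicate of $\fifa$ which is again discrete, since the definition of discreteness refers only to reindexings and to the fiberwise ordering, both of which are preserved (up to isomorphism) by equivalences.

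For the converse, suppose $\fifa$ admits a discrete generic predicate $\iota\in\fifa(A)$. Lemma~\ref{lem:genpred-image} provides an equivalence between $\fifa$ and $\famf\brar$ for the uniform preorder $\brar$ built from $\iota$ via formula~\eqref{eq:pq}, and under this equivalence $\iota$ corresponds to the canonical generic predicate $\id_A\in\famf\brar(A)$. By invariance of discreteness under equivalence, $\id_A$ is discrete in $\famf\brar$, and Proposition~\ref{prop:dco-iff-discrete} then yields that $\brar$ is a DCO.

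The only real content is thus the invariance lemma for discreteness, which is entirely routine: a natural isomorphism of indexed preorders shuffles around the inequality $e^*\vf\leq f^*\delta$ and the existence of the factorization $g$ coherently, so that the defining lifting property for $\delta$ transports to the corresponding property for its image under the equivalence. No real obstacle is anticipated; this corollary is essentially a formal consequence of the two cited results.
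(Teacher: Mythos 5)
Your proposal is correct and follows exactly the paper's own route: the paper proves this corollary by citing Proposition~\ref{prop:dco-iff-discrete} together with Lemma~\ref{lem:genpred-image}, and the invariance of discreteness under equivalence that you make explicit is the (routine) glue the paper leaves implicit.
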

\begin{proof}
This follows from Proposition~\ref{prop:dco-iff-discrete} together with
Lemma~\ref{lem:genpred-image}. A direct proof is given
in~\cite[Theorem~2.4]{frey2019characterizing}.
\end{proof}
\begin{remark}
It is possible that the same indexed preorder has discrete and non-discrete
generic predicates: if $\fifa$ is an indexed preorder with discrete generic
predicate $\iota\in\fifa(A)$ and $f:B\epi A$ is a surjection, then $f^*A$ is a
generic predicate which is discrete only if $f$ is a bijection. If $f$ is not a
bijection, we obtain a DCO-representation of $\fifa$ with underlying set $A$,
and a representation as a non-discrete uniform preorder with underlying set $B$.
\end{remark}
\begin{remark}[Cartesian DCOs]\label{rem:cart-dco}If a cartesian uniform preorder $(A,R)$ is a DCO, then the relations
$\lambda,\rho\in R$ from Lemma~\ref{lem:adj-cond} are partial functions, and
jointly form a retraction $\langle\lambda,\rho\rangle:A\pto A\x A$ of $\wedge :
A\x A\to A$, i.e.\ we have $\langle \lambda,\rho \rangle\circ \wedge = \id_{A\x
A}$. Moveover, although we don't have $\wedge\circ \langle \lambda,\rho
\rangle=\id_A$, we have an inclusion $\wedge\circ \langle \lambda,\rho
\rangle\subs \id_A$ of partial functions, since by construction $\lambda$ an
$\rho$ are only defined on the range of $\wedge$.

More generally we define $n$-ary versions
\begin{equation}
\nwedge{n}:A^n\to A\qtext{for}n\in\N
\quad\qqtext{and}\quad
\npi{n}{i}\in R\qtext{for}1\leq i\leq n
\end{equation}
by $\nwedge{0}(\ast)=\top$, $\nwedge{n+1}(\vec a,b)=\nwedge{n}(\vec a)\wedge b$,
and $\npi{n}{i}=\rho\circ \lambda^n_i$, so that we have
\begin{equation}
\langle\npi{n}{1},\dots,\npi{n}{n}\rangle\circ\nwedge{n}=\id_{A^n}
\qquad\text{and}\qquad
\nwedge{n}\circ\langle\npi{n}{1},\dots,\npi{n}{n}\rangle\subs \id_A
\end{equation}
for all $n\in \N$. Loosely following Hofstra~\cite[pg.~254]{hofstra2006all}, we
introduce the notation
\begin{align}
R^{(n)}\;&=\;\setof{r\subs A^n\x A}{\ex s\in R\;.\; r\,=\,s\circ \nwedge{n}}\\
&=\;\setof{r\subs A^n\x A}{r\circ\langle\npi{n}{1},\dots,\npi{n}{n}\rangle\in R}
\end{align}
for `$n$-ary computable' functions, which can be viewed as representing
`multi-inequalities' $\vf_{1},\dots, \vf_{n}\leq\psi$ matching the form of
intuitionistic sequents. A paradigmatic example is given by the DCO of
subrecursive functions (Example~\ref{ex:cuords}\ref{ex:cuords:rec}): here
$R^{(n)}$ contains precisely the \emph{$n$-ary partial sub-recursive functions},
i.e.\ sub-functions of $n$-ary partial recursive functions in the usual sense.
\end{remark}

\section{Partial combinatory algebras}\label{se:pcas}

Partial combinatory algebras can be viewed as trivially ordered OPCAs, but there
is a slight mismatch with the traditional definition of PCA which we
address\,---\,following Streicher~\cite{streicherreal}\,---\,by introducing the
term of \emph{weak} PCA.
\begin{definition}\label{def:pca}
\begin{enumerate}
\item\label{def:pca:wpca}
A \emph{weak partial combinatory algebra} (weak PCA) is a discretely
ordered OPCA, i.e.\ a pair $(A,\ap)$ such that $(A,=,\ap)$ is an OPCA. 
\item\label{def:pca:pca} A \emph{partial combinatory algebra} (PCA) is a weak
PCA in which the element $\coms$ from
Proposition~\ref{prop:opas-wcombicomp}\ref{prop:opas-wcombicomp-ks} can be
chosen such that $\coms\ap a \ap b\ap c\down$ (if and) only if $a\ap c\ap (b\ap
c)\down$.
\end{enumerate}
\end{definition}
There are obvious `filtered' versions of these definitions, for which we use the
adjecrive `relative' as is more common in the unordered case.
\begin{definition}\label{def:rpca}
\begin{enumerate}
\item\label{def:rpca:wrpca}
A \emph{weak relative PCA} is a triple $\aahd$ where $(A,\ap)$ is a
PCA and $A\hs\subs A$ is a filter in the sense of
Definition~\ref{def:opca}\ref{def:opca:filter}.
\item\label{def:rpca:rpca} A \emph{relative PCA} is a weak relative PCA in which
the $\coms\in A\hs$ can be chosen to satisfy the stronger condition of
Definition~\ref{def:pca}\ref{def:pca:pca}.
\end{enumerate}
\end{definition}

\begin{remarks}\label{rem:rpca}
\begin{enumerate}
\item\label{rem:rpca:trad} Relative PCAs are called \emph{elementary inclusions
of PCAs} in~\cite[Sections 2.6.9 and 4.5]{vanoosten2008realizability}
\item\label{rem:rpca:faber}
Faber and van Oosten showed that for every weak PCA $(A,\ap)$ there is a
PCA $(A,\ast)$ such that inducing the same indexed preorder structure on
$\Set(-,A)$ and thus the same uniform preorder structure $A$ (strictly speaking
their result is phrased in terms of \emph{applicative morphisms}, but the
statement about indexed preorders is an easy
consequence)~\cite[Theorem~5.1]{faber2016effective}. Their argument generalizes
easily to relative PCAs.

\end{enumerate}
\end{remarks}
Specializing the constructions from Section~\ref{se:opca}, every relative (weak)
PCA $\aahd$ gives rise to a \ccomp DCO $(A,R_{A\hs})$ with
a basis given by $\setof{(e\ap-):A\pto A}{e\in A\hs}$. Thus, the fiberwise
ordering of the $\ex$-completion $\famf(D(A,R_{A\hs}))$ is given by 
\begin{equation}
(\vf:I\to A)\leq(\psi:I\to A)\qquad\text{iff}\qquad
\ex e\in A\hs\sfa i\in I\sfa a\in\vf(i)\;.\;e\ap a\in\psi(i)
\end{equation}
and we recognize at once that this is the \emph{relative realizability tripos}
over $\aahd$~\cite[Section~2.6.9]{vanoosten2008realizability}.

\medskip

In the following we sketch the argument that \emph{every} \ccomp
DCO arises from a relative PCA this way. To start, given a \ccomp
DCO $(A,R)$ with $@$ (which we call \emph{generic function} in the discrete
case), we define $(-\cdot-):A\x A\pto A$ by $a\ap b = @(a\wedge b)$ and
$A\hs\subs A$ by
\begin{equation}
A\hs\,:=\, \setof{a\in A}{\{(\top,a)\}\in R}\,=\,\setof{a\in A}{\top\leq a \text{ in }\famf\brar(1)}.
\end{equation}
Note that the elements of $A\hs$ correspond to Hofstra's \emph{designated truth
values}~\cite[pg.~244]{hofstra2006all}. If $a,b\in A\hs$ such that $a\ap b =
@(a\wedge b)$ is defined, then $a\ap b\in A\hs$ since $\top\leq a$ and $\top\leq
b$ implies $\top\leq a\wedge b$; and $a\wedge b\leq @(a\wedge b)$, i.e.\ $A\hs$
is closed under application in $A$.
\begin{proposition}\label{prop:rcdcos-are-relpcas}Let $\brar$ be a \ccomp cartesian DCO.
\begin{enumerate}
\item 
For every $n$-ary polynomial $p[x_1,\dots,x_n]$ over the partial applicative
structure $\aahd$ with coefficients in $A\hs$, the partial evaluation function
$\vec a\mapsto p[\vec a]$ is in $R^{(n)}$ (see Remark~\ref{rem:cart-dco}).
\item For all $n\in\N$ and $r\in R^{(n+1)}$ there exists an $e\in A\hs$ such that
for all $a_1,\dots,a_n,b\in A$,
\begin{itemize}
\item $e\ap a_1\ap\dots\ap a_n \down$, and 
\item $r(a_1,\dots,a_n,b)= e\ap a_1\ap\dots\ap a_n\ap b$ whenever
$r(a_1,\dots,a_n,b)\down$.
\end{itemize}
\item $\aahd$ is a weak relative PCA, and the induced \ccomp DCO
$(A,\dfahs)$ is equal to $(A,R)$.
\end{enumerate}
\end{proposition}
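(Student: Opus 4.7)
The plan is to prove (i) by structural induction on the polynomial, derive (ii) by iterating relational completeness, and combine them to obtain (iii). I expect the main obstacle to be in (ii): the iteration must produce \emph{total} intermediate curried functions so that the partial applications $e\ap a_1\ap\cdots\ap a_k$ are defined for every $k\leq n$ unconditionally, which uses crucially that the $\tilde r$ provided by Definition~\ref{def:rcomp} is a total function, not merely a relation.

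For (i), a variable $x_i$ has evaluation function $\npi{n}{i} \in R \subs R^{(n)}$ by Remark~\ref{rem:cart-dco}. A constant $c \in A\hs$ satisfies $\{(\top,c)\}\in R$ by definition, and composition with $\tau\in R$ (Lemma~\ref{lem:cart-cond}) gives $\{(a,c):a\in A\}\in R$, which puts the constant evaluation function into $R^{(n)}$ by down-closure. For the inductive step $p\ap q$ with $\bar p = s_p\circ\nwedge{n}$ and $\bar q = s_q\circ\nwedge{n}$ (where $s_p,s_q\in R$), the composite evaluation equals $(@\circ\llangle s_p, s_q\rrangle)\circ\nwedge{n}$, and its first factor lies in $R$ by closure of $R$ under composition and under the $\llangle -,-\rrangle$ construction.

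For (ii), I recursively build total functions $r_k\in R^{(k)}$ for $k=n,n-1,\ldots,0$, starting from $r_{n+1}:=r$ and maintaining the invariant $r_k(\vec a)\ap a_{k+1} = r_{k+1}(\vec a, a_{k+1})$ whenever the right-hand side is defined. Given $r_{k+1} = s\circ\nwedge{k+1}$ with $s\in R$, I apply relational completeness to $s$ to obtain a total $\tilde s\in R$ with $\tilde s(\alpha)\ap\beta = s(\alpha\wedge\beta)$ when defined, and set $r_k := \tilde s\circ\nwedge{k}$; this is a total function in $R^{(k)}$ satisfying the invariant by direct computation. Once the bottom is reached, the element $e := r_0(\ast)$ lies in $A\hs$ because the total function witnessing $r_0$ contains the singleton $\{(\top,e)\}$, which is then in $R$ by down-closure. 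Telescoping the invariant yields $e\ap a_1\ap\cdots\ap a_k = r_k(\vec a)$ for every $k\leq n$ (defined by totality of the $r_k$), and $e\ap a_1\ap\cdots\ap a_{n+1} = r(\vec a, a_{n+1})$ when the right-hand side is defined.

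For (iii), applying (i) and then (ii) to the polynomials $p[x,y]=x$ and $p[x,y,z] = x\ap z\ap(y\ap z)$ produces $\comk,\coms\in A\hs$ satisfying the axioms of Proposition~\ref{prop:opas-wcombicomp}\ref{prop:opas-wcombicomp-ks}. Combined with the trivial upward closure of $A\hs$ in the discrete setting and the closure under application noted before the proposition, this makes $\aahd$ a weak relative PCA. For the equality $(A,\dfahs) = (A,R)$, the inclusion $\dfahs\subs R$ is direct: for $e\in A\hs$, the constant-$e$ partial function $c_e = \{(a,e):a\in A\}$ lies in $R$ as $\{(\top,e)\}\circ\tau$, whence $(e\ap-) = @\circ\llangle c_e,\id_A\rrangle$ is in $R$ by closure under composition and $\llangle-,-\rrangle$. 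The reverse inclusion $R\subs\dfahs$ is the case $n=0$ of (ii) combined with down-closure.
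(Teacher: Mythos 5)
Your argument is correct, but there is nothing in the paper to compare it against line by line: the paper's ``proof'' of this proposition is a citation to \cite[Lemma~2.14]{frey2019characterizing} for the non-relative case (with a remark that the relative generalization is straightforward, and a pointer to Hofstra's analogous results for BCOs in \cite[Section~6]{hofstra2006all}). Your self-contained proof takes the route those references take in spirit: structural induction over polynomials for (i), with the application case handled by $@\circ\llangle s_p,s_q\rrangle\circ\nwedge{n}$, and iterated currying for (ii). You correctly identify and handle the two points where such an argument could silently fail: the intermediate curried functions $r_k=\tilde s\circ\nwedge{k}$ must be \emph{total} so that $e\ap a_1\ap\dots\ap a_k$ is unconditionally defined, which is exactly what the entirety requirement on $\tilde r$ in Definition~\ref{def:rcomp} provides; and the element $e=r_0(\ast)$ must be shown to lie in $A\hs$, which you get from $\{(\top,e)\}\subs\tilde s_0\in R$ and down-closure. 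The derivation of $\comk,\coms\in A\hs$ from the polynomials $x$ and $x\ap z\ap(y\ap z)$, and both inclusions in $(A,\downcl\mcf_{A\hs})=(A,R)$ (using $(e\ap-)=@\circ\llangle c_e,\id_A\rrangle$ for one direction and the $n=0$ case of (ii) together with $R\subs R^{(1)}$ for the other), are all sound. The only detail worth spelling out is the last-mentioned inclusion $R\subs R^{(1)}$, which holds because $r\circ\npi{1}{1}\in R$ for $r\in R$; with that noted, I see no gap.
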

\begin{proof}
This is proved in~\cite[Lemma~2.14]{frey2019characterizing} for the non-relative
case, and the generalization to the relative case is straightforward. Hofstra
proved analogous statements for BCOs and filtered OPCAs
in~\cite[Section~6]{hofstra2006all}.
\end{proof}

\begin{theorem}\label{thm:rtr-char}
The following are equivalent for a tripos $\trip$.\begin{enumerate}
\item $\trip$ is equivalent to a relative realizability tripos over a relative
PCA.
\item $\trip$ has enough $\exists$-prime predicates, and $\prim(\trip)$ has
finite meets and a discrete generic predicate.
\end{enumerate}
\end{theorem}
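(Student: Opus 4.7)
The plan is to combine the tripos/relational-completion correspondence of Remark~\ref{rem:relcompl}\ref{rem:relcompl:tripos-char} with the characterization of DCOs via discrete generic predicates (Corollary~\ref{cor:dco-discgen}) and the PCA-recovery statement of Proposition~\ref{prop:rcdcos-are-relpcas}.

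For the direction (i)$\Rightarrow$(ii): if $\trip$ is the relative realizability tripos of a relative PCA $\aahd$, then by the construction reviewed at the end of Section~\ref{se:pcas} we have $\trip\simeq\famf(D(A,R_{A\hs}))$, where $(A,R_{A\hs})$ is a cartesian relationally complete uniform preorder whose relations are all partial functions, hence a DCO. By Proposition~\ref{prop:dar-primal-ecomp}, $\prim(\trip)$ is equivalent (via $\famf(\eta)$) to $\famf(A,R_{A\hs})$, so $\prim(\trip)$ has finite meets (because $(A,R_{A\hs})$ is cartesian) and, being representable by a DCO, has a discrete generic predicate by Proposition~\ref{prop:dco-iff-discrete}. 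Existence of enough $\exists$-primes is built into the tripos/$\ex$-completion relationship.

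For the direction (ii)$\Rightarrow$(i): since $\trip$ has enough $\exists$-primes and $\prim(\trip)$ has finite meets, the correspondence in Remark~\ref{rem:relcompl}\ref{rem:relcompl:tripos-char} produces a cartesian, relationally complete uniform preorder $\brar$ with $\famf\brar\simeq\prim(\trip)$ and $\famf(D\brar)\simeq\trip$. Because $\prim(\trip)$ has a discrete generic predicate by hypothesis, Corollary~\ref{cor:dco-discgen} tells us that $\brar$ can be taken to be a DCO. Now Proposition~\ref{prop:rcdcos-are-relpcas} applies: the relationally complete cartesian DCO $\brar$ yields a weak relative PCA $\aahd$ with $(A,\dfahs)=(A,R)$. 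Invoking the Faber--van Oosten result recorded in Remark~\ref{rem:rpca}\ref{rem:rpca:faber}, we replace the weak relative PCA by an honest relative PCA on the same carrier with the same induced uniform preorder structure, whence $\trip\simeq\famf(D(A,R_{A\hs}))$ is a relative realizability tripos.

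The only real subtlety is making sure the two generic predicates line up: one has to know that when one applies the correspondence of Remark~\ref{rem:relcompl}\ref{rem:relcompl:tripos-char}, the underlying set of the uniform preorder representing $\prim(\trip)$ is \emph{the} carrier over which a discrete generic predicate lives, so that discreteness of that generic predicate really is the input to Corollary~\ref{cor:dco-discgen}. This is precisely the content of the passage from a generic $\exists$-prime covering the generic predicate of $\trip$ back to a uniform preorder, and it is the step where the hypothesis ``$\prim(\trip)$ has a discrete generic predicate'' is used (beyond what Remark~\ref{rem:relcompl}\ref{rem:relcompl:tripos-char} already provides). Once this is in place, the rest is bookkeeping.
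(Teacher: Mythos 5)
Your proposal is correct and follows essentially the same route as the paper: the forward direction via Proposition~\ref{prop:dar-primal-ecomp} and Proposition~\ref{prop:dco-iff-discrete}, and the converse by representing $\prim(\trip)$ as a cartesian, relationally complete DCO (Corollary~\ref{cor:dco-discgen} plus Theorem~\ref{thm:rel-compl}) and then applying Proposition~\ref{prop:rcdcos-are-relpcas} and Remark~\ref{rem:rpca}\ref{rem:rpca:faber}. The only cosmetic difference is that you route the converse through the correspondence of Remark~\ref{rem:relcompl}\ref{rem:relcompl:tripos-char} rather than citing Proposition~\ref{prop:ecomp-if-enough-primes} and Theorem~\ref{thm:rel-compl} directly, and your ``lining up'' worry dissolves because relational completeness depends only on the represented indexed preorder, so one may simply take the DCO representation furnished by the discrete generic predicate from the start, as the paper does.
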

\begin{proof}Assume first that $\trip = \famf(D(A,R_{A\hs}))$ for a relative PCA $\aahd$.
Then Proposition~\ref{prop:dar-primal-ecomp} implies that $\trip$ has enough
$\exists$-primes and $\trip\simeq\famf(A,\dfahs)$. We have established in
Section~\ref{se:opca} that $\famf(A,\dfahs)$ is an indexed meet-semilattice.
and, it has a discrete generic predicate by
Proposition~\ref{prop:dco-iff-discrete}.

Conversely, assume (ii). Then $\prim(\trip)\incl\trip$ is an $\ex$-completion by
Proposition~\ref{prop:ecomp-if-enough-primes}, and $\prim(\trip)$ is
representable by a relative DCO $(A,R)$ by Corollary~\ref{cor:dco-discgen}. The
DCO $(A,R)$ is cartesian since $\prim(\trip)$ has finite meets, and 
\ccomp since its $\ex$-completion is a tripos. Thus, it comes from a weak
relative PCA by Proposition~\ref{prop:rcdcos-are-relpcas}, and from a relative
PCA by Remark~\ref{rem:rpca}\ref{rem:rpca:faber}.
\end{proof}
\begin{remark}
Theorem~\ref{thm:rtr-char} specializes to a characterization of
\emph{non-relative} realizability triposes by adding the condition that $\trip$
is \emph{two-valued}, i.e.\ $\trip(1)\simeq\{\bot<\top\}$. This is equivalent
to $\prim(\trip)(1)\simeq 1$, a property that is called `shallow'
in~\cite{frey2019characterizing}.
\end{remark}

\bibliographystyle{alpha}

\end{document}